\theoremstyle{plain}
 \newtheorem{thm}{Theorem}[section]
 \newtheorem{lem}{Lemma}[section]
 \newtheorem{cor}{Corollary}[section]
\theoremstyle{definition}
 \newtheorem{exm}{Example}[section]
 \newtheorem{dfn}{Definition}[section]
\theoremstyle{remark}
 \newtheorem{rem}{Remark}[section]
 \numberwithin{equation}{section}
\renewcommand{\leq}{\leqslant}\renewcommand{\geq}{\geqslant}
\newcommand{\eg}{e.\,g.\ }
\newcommand{\ie}{i.\,e.\ }
\renewcommand{\setminus}{\smallsetminus}
\title[Fractional derivative of tempered distributions \\ A new approach]{Fractional calculus of tempered distributions.\\ A new approach.}
\subjclass[2020]{Primary 26A33 ; Secondary 46F10
}
\keywords{fractional derivatives; tempered distributions; generalized Fourier transforms}
\author[Belardinelli]{Cyril Belardinelli} 
\address{ 
Kantonsschule Solothurn\\
Solothurn\\
Switzerland}
\email{cyril.belardinelli@ksso.ch}
\begin{document}
\vspace{18mm} \setcounter{page}{1} \thispagestyle{empty}
\begin{abstract}
In the present article, a new method for the evaluation of fractional derivatives of arbitrary real order is proposed.
Numerous but inequivalent formulations have been given in the past. Some of them exhibit unsatisfactory properties such as \eg the absence of a semigroup property.
It is shown that the proposed definition is free from such drawbacks. In order to achieve such a generalization one has to work in the context of tempered distributions (generalized functions) where the concept works nicely. The last part of the article shows the possibility of defining fractional derivatives of periodic functions (Fourier series). 
\end{abstract}
\maketitle
\section{\label{intro}Introduction}
An essential part of mathematical work consists in extending the range of validity of given concepts to more general situations.  
Well known examples are Sobolev's generalized (weak) derivative
and the extension of factorials of integers to factorials of complex numbers by introducing the Gamma function. In the past many attempts have been made to introduce derivatives of non-integer order, commonly known as fractional derivatives.\\
Some well known definitions are due to J. Liouville, G. F. B.
Riemann, A. Gr\"unwald, A. Letnikov, M. Riesz, A. Erd\'elyi, H. Kober, and, more recently, to M. Caputo.
Unfortunately, these definitions give rise to different results when computing the derivatives of the
same function. 
Moreover, it turns out that some of the formulations give rise to various inconsistencies and/or unwanted features. This occurs \eg for the Riemann-Liouville fractional derivative where the fractional derivative of a constant function is non-zero. One can raise doubts if the concept of a \emph{function's} fractional derivative is meaningful at all, at least if one wants to preserve the properties i)\,-\,iv) given below.\\
An interesting question arises when one asks about fractional derivatives of \emph{tempered distributions} (generalized functions) rather than \emph{functions}. Motivated by that question we seek for a generalization by using the theory of generalized functions. \cite{schwartz:1950, schwartz:1951},\cite{gelfand:1964}.\\
By using the well-known fact that every tempered distribution (subsequently denoted by $T[\varphi]$) has a well defined derivative and is Fourier transformable as well it is possible to obtain a consistent definition for the fractional derivative of arbitrary real order. We stress the fact that the proposed definition as given by Eq.~\ref{the_definition} in Sec.~\ref{sec_frac_deriv} covers also fractional anti-derivatives corresponding to derivatives of negative fractional order.\\
The main purpose of the present paper is to show that such an extension is possible in a coherent way provided that one operates within the space of tempered distributions $\mathscr{S^{\prime}(\mathbb{R})}$. These results are exposed in Sec.~\ref{sec_frac_deriv}. The content of Sec.~\ref{Sec.1} 
 is devoted to the evaluation of several distributional Fourier transforms one needs for the definition of fractional derivatives. In this sense, Sec.~\ref{Sec.1} can be viewed as a preparative. However, Sec.~\ref{Sec.1} section has its own raison d'\^etre since some new formulae are derived such as \eg the Fourier transform of the distribution $\mathrm{Pf}\,x^{-\alpha}$ ($\alpha \in \mathbb{R}$) and a general formula for the Fourier transform of rational functions.\\
This paper is written in a didactic spirit. Exposed derivations are perhaps more detailled than necessary. Numerous examples are given in every section. This article deals only with functions (generalized or not) of one variable. 
\\To be explicit, in Sec.~\ref{sec_frac_deriv} we introduce the set of linear operators $\mathscr{D}^{\alpha}\colon\mathscr{S^{\prime}(\mathbb{R})}\to \mathscr{S^{\prime}(\mathbb{R})}$ depending on a real parameter $\alpha$ satisfying the following properties:\\
\begin{itemize}
\label{properties}
\item[i)] \emph{Extension property}: $\mathscr{D}^{\alpha}T[\varphi]=(-1)^{\alpha}\,T[\varphi^{(\alpha)}]$ for every positive integer $\alpha$.\\ 
\item[ii)] Each of the two sets  $(\mathscr{D}^{\alpha})_{\alpha\geq 0}$ and $(\mathscr{D}^{\alpha})_{\alpha\leq 0}$ forms a commutative ring under canonical addition and multiplication. The multiplicative identity is given by $\mathds{1}:=\mathscr{D}^{0}$.\\
\item[iii)] The semigroup property $\mathscr{D}^{\alpha}\mathscr{D}^{\beta}\,T[\varphi]=\mathscr{D}^{\alpha+\beta}\,T[\varphi]$ holds separately in each of the two sets $(\mathscr{D}^{\alpha})_{\alpha\geq 0}$ and $(\mathscr{D}^{\alpha})_{\alpha\leq 0}$.\\
\item[iv)] Fractional derivatives $\mathscr{D}^{\alpha}$ of constant distributions (\ie generated by constant functions) vanish for every positive fractional order $\alpha$.
\end{itemize}
\begin{rem}
property ii) means that $(\mathscr{D}^{\alpha},+)_{\alpha\geq 0}$ is an abelian group and $(\mathscr{D}^{\alpha},\cdot)_{\alpha\geq 0}$ is a semi-group with multiplicative identity defined in ii). The same is true for the set (of fractional anti-derivatives): $(\mathscr{D}^{\alpha})_{\alpha\leq 0}$. 
\end{rem}
\section{Preliminaries}
\subsection{Hadamard Regularization of divergent integrals}
The function\\ $f(x)=x^{-\alpha}$ 
has a non-integrable singularity in $x=0$ in the case $\alpha \geq 1$. However, for the purpose of Sec.~\ref{sec_frac_deriv} we need to turn the function $f(x)=x^{-\alpha}$ into a tempered distribution for a general real value $\alpha$. This goal can be achieved by using \emph{Hadamard's finite part} regularization $\mathrm{Pf}$ (Partie finie)
which is known to generalize \emph{Cauchy's Principal value} $\mathrm{Vp}$ (Valeur principale).\\
\\
The formal definition is given by (with an arbitrary $\alpha \in \mathbb{R}$):
\begin{equation}
\label{specific_distribution}
\mathrm{Pf}\frac{1}{x^{\alpha}}:=\mathrm{Pf}\int\limits_{\mathbb{R}}\frac{\varphi(x)}{x^{\alpha}}\mathrm{d}x=\mathrm{f.\,p.\,of}\lim_{\epsilon\to0^{+}}\int\limits_{|x|>\epsilon}\frac{\varphi(x)}{x^{\alpha}}\mathrm{d}x
\end{equation}
\begin{exm}
Well-known examples are given by the following distributions \cite{vladimirov:2002}:
\begin{equation*}
\begin{split}
\mathrm{Vp}\frac{1}{x}:&=\lim_{\epsilon\to0^{+}}\int\limits_{|x|>\epsilon}\frac{\varphi(x)}{x}\mathrm{d}x\\
\mathrm{Pf}\frac{1}{x^2}:&=\lim_{\epsilon\to0^{+}}\int\limits_{|x|>\epsilon}\frac{\varphi(x)}{x^2}\mathrm{d}x\,-2\frac{\varphi(0)}{\epsilon}
\end{split}
\end{equation*}
Where both distributions are linked by the following identity: 
\begin{equation}
\label{derivative_pv_1_x}
\mathscr{D}\,\mathrm{Vp}\frac{1}{x}=-\mathrm{Pf}\frac{1}{x^2}
\end{equation}
A simple proof of the latter formula is given in App.~\ref{app1}.
\end{exm}
\subsection{Logarithmic branch}
The integral Eq.~\ref{specific_distribution} is in general multi-valued because of the fractional power $x^{\alpha}$ occurring in the integrand. In order to avoid ambiguities of this kind it is necessary to fix the branch of the complex logarithm. It turns out that the following definition is the convenient choice. (See Eq.~\ref{prefac=1}).
\begin{align*}
\label{branch_log}
\log \colon \mathbb{C}^{\times}&\to
\mathbb{C}\\
r\mathrm{e}^{i\varphi} &\mapsto \log{r}+i\varphi
\end{align*}
where $\mathbb{C}^{\times}:=\{r\mathrm{e}^{i\varphi}|\,r>0,\,-\pi\leq\varphi<\pi \}$.\\
\begin{rem}
Note that our choice of the branch differs from the \emph{principal branch} on the negative half axis $\mathbb{R}^{-}:=\mathbb{R}\setminus\{x\in\mathbb{R}:x\leq0\}$. In our choice  we have \eg:
\begin{equation*}
 \sqrt{-1}=\mathrm{e}^{\frac{1}{2}\log{(-1)}}=-i
\end{equation*}
We mention also a few identities we use constantly throughout the paper:
\begin{equation*}
\log{i}=i\frac{\pi}{2}\mathrm{,} \quad \log{(-i)}=-i\frac{\pi}{2}\mathrm{,} \quad \log{(-1)}=-i\pi
\end{equation*}
\end{rem}
\section{\label{Sec.1}Fourier transforms of tempered distributions}
In the present section we evaluate the Fourier transform of a few distributions we need in Sec.~\eqref{sec_frac_deriv}. As is well known,
a tempered distribution $T[\varphi]\in \mathscr{S}^{\prime}\left(\mathbb{R}\right)$ has a well-defined Fourier transform which is given by:\\
\begin{equation*}
\mathscr{F}T[\varphi]:=T[\hat{\varphi}]\quad \mathrm{for}\,\, \mathrm{every}\,\, \mathrm{Schwartz} \,\,\mathrm{function}\,\varphi \in \mathscr{S}(\mathbb{R})
\end{equation*}\\
where we use the following convention for the Fourier transform:
\begin{equation*}
\hat{\varphi}(k):=\int\limits_{\mathbb{R}}\varphi(x)\,\mathrm{e}^{-ikx}\mathrm{d}x
\end{equation*}
\subsection{\label{subsec.A}Fourier transform of $\mathbf{\mathrm{Pf}\,x^{\alpha}}$ with real $\alpha$}
A function of the form $f(x):=x^{\alpha},\, \alpha\geq 0$ is polinomially bounded. It generates therefore a tempered distribution (where we use the logarithmic branch defined above): 
\begin{equation*} 
T_{f}[\varphi]=\int\limits_{\mathbb{R}}\varphi(x) f(x)\mathrm{d}x
\end{equation*}
with a well-defined Fourier transform $T_{f}[\hat{\varphi}]$. In the present section we show by applying Hadamard's regularization technique that the distributional Fourier transform can be extended to the general case of a real exponent $\alpha$.
\begin{lem}
\label{lemma}
\begin{equation*}
\mathscr{S}^{\prime}-\lim_{\epsilon\to0^{+}}\frac{1}{(\epsilon\pm ix)^{\alpha+1}}=
\begin{cases}
\frac{(\pm i)^{\alpha}}{\alpha!}\left[\pi\,\mathscr{D}^{\alpha}\delta\mp i\, \mathscr{D}^{\alpha}\,\mathrm{Vp}\,\frac{1}{x}\right]&\mathrm{if}\, \alpha\in \mathbb{N}_{0}\\
\mathrm{Pf}\,\frac{1}{(\pm ix)^{\alpha+1}}&\mathrm{if}\, \alpha\in \mathbb{R}\setminus \mathbb{N}_{0}\\
\end{cases}
\end{equation*}
where $\mathscr{D}^{n}\delta[\varphi]$ denotes the $n^{\mathrm{th}}$ derivative of the Dirac-distribution $\delta[\varphi]$.
\end{lem}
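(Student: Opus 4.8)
The plan is to handle both signs at once: the lower sign follows from the upper one by the change of variables $x\mapsto -x$, which identifies the function $(\epsilon-ix)^{-(\alpha+1)}$ with $(\epsilon+iy)^{-(\alpha+1)}$ under $y=-x$ (no branch ambiguity, since these are literally the same complex number) and, on the distributional side, sends $\delta\mapsto\delta$, $\mathrm{Vp}\frac1x\mapsto-\mathrm{Vp}\frac1x$, and $\mathscr{D}^n\mapsto(-1)^n\mathscr{D}^n$. Tracking these signs turns the prefactor $(+i)^n$ into $(-i)^n$ and flips the relative sign inside the bracket exactly as the statement for the lower sign demands, and likewise interchanges the two finite-part distributions. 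It therefore suffices to treat the upper sign, which I reduce to two ingredients: a base case and a recursion in $\alpha$ that raises the exponent by one unit and is compatible with the $\mathscr{S}'$-continuity of $\mathscr{D}$.

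For the base case I first treat $\alpha=0$. Writing $\frac{1}{\epsilon+ix}=\frac{\epsilon}{\epsilon^2+x^2}-i\,\frac{x}{\epsilon^2+x^2}$, the first family is the Poisson kernel of total mass $\pi$, converging in $\mathscr{S}'$ to $\pi\delta$, while the second converges to $\mathrm{Vp}\frac1x$; this is the Sokhotski--Plemelj identity and gives $\mathscr{S}'\text{-}\lim_{\epsilon\to0^+}\frac{1}{\epsilon+ix}=\pi\delta-i\,\mathrm{Vp}\frac1x$, which is the stated right-hand side at $\alpha=0$. Separately, whenever $\alpha<0$ the exponent satisfies $\alpha+1<1$, so $x^{-(\alpha+1)}$ is locally integrable; dominated convergence then shows that $\int_{\mathbb{R}}\varphi(x)(\epsilon+ix)^{-(\alpha+1)}\,\mathrm{d}x$ tends to the absolutely convergent integral $\int_{\mathbb{R}}\varphi(x)(ix)^{-(\alpha+1)}\,\mathrm{d}x$, which for $\alpha<0$ is nothing but $\mathrm{Pf}\,(ix)^{-(\alpha+1)}[\varphi]$. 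This settles every $\alpha<0$ at once.

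The recursion comes from the pointwise identity $(\epsilon+ix)^{-(\alpha+1)}=\frac{i}{\alpha}\,\partial_x(\epsilon+ix)^{-\alpha}$, valid for $\alpha\neq 0$ and $\epsilon>0$. Since $\mathscr{D}$ acts on the smooth distribution generated by $(\epsilon+ix)^{-\alpha}$ as ordinary differentiation and commutes with the $\mathscr{S}'$-limit, this upgrades to $\mathscr{S}'\text{-}\lim(\epsilon+ix)^{-(\alpha+1)}=\frac{i}{\alpha}\,\mathscr{D}\big(\mathscr{S}'\text{-}\lim(\epsilon+ix)^{-\alpha}\big)$. Iterating upward from the base point $\alpha=0$ runs through the positive integers and, by a one-line induction with step factor $\frac{i}{\alpha}$, reproduces both the prefactor $\frac{i^n}{n!}$ and the derivatives $\mathscr{D}^n\delta$ and $\mathscr{D}^n\mathrm{Vp}\frac1x$. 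Iterating instead from a base point $\alpha_0\in(-1,0)$---every non-integer $\alpha>0$ is reached from such an $\alpha_0$ in finitely many integer steps, with all step factors nonzero---keeps the limit inside the finite-part distributions, since the fractional part of the exponent is preserved.

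The main obstacle is the final identification in the non-integer branch: one must show that differentiating a finite part reproduces the finite part of the differentiated power, i.e. $\frac{i}{\alpha}\,\mathscr{D}\,\mathrm{Pf}\,(ix)^{-\alpha}=\mathrm{Pf}\,(ix)^{-(\alpha+1)}$ whenever $\alpha\notin\mathbb{N}_0$, with no spurious delta contribution. This is precisely where the two branches separate: for integer exponents the finite-part subtraction carries a logarithmic term whose derivative produces the $\delta$-contributions found above, whereas for non-integer exponents this resonance is absent and the naive differentiation rule holds. I would prove the identity from the definition~\eqref{specific_distribution} of $\mathrm{Pf}$, expanding $\varphi$ in a Taylor polynomial at the origin to separate the regularized singular part from an absolutely convergent remainder, and checking that the boundary terms produced by integration by parts cancel exactly because $\alpha+1\notin\mathbb{N}$. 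The remaining estimates---uniform integrability away from the origin and control of the remainder---are routine.
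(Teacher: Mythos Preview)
Your approach is essentially the paper's: both prove the integer case by repeated integration by parts back to Sokhotski--Plemelj at $\alpha=0$, and both handle non-integer $\alpha$ by reducing to an exponent in $(0,1)$ and then differentiating $[\alpha]+1$ times. The differences are cosmetic---you treat the small-exponent base case by direct dominated convergence where the paper uses the factorization $(\epsilon\pm ix)^{-\gamma}=(\epsilon\pm ix)^{1-\gamma}/(\epsilon\pm ix)$ together with Sokhotski--Plemelj, and you explicitly isolate the identity $\tfrac{i}{\alpha}\,\mathscr{D}\,\mathrm{Pf}(ix)^{-\alpha}=\mathrm{Pf}(ix)^{-(\alpha+1)}$ for $\alpha\notin\mathbb{N}_0$, which the paper invokes (``taking the $([\alpha]+1)$th derivative \dots\ one obtains the claimed result'') without further comment.
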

\begin{proof}
First, we prove the case $\alpha=n\in\mathbb{N}$.\\
\\
Let $\varphi \in \mathscr{S}(\mathbb{R})$. Repeated integration by parts gives then:\\
\begin{equation}
\label{sokhotski}
\begin{split}
\lim_{\epsilon\to0^{+}}\int\limits_{\mathbb{R}}\mathrm{d}x\,\varphi(x)(\epsilon\pm ix)^{-n-1}
&=\frac{(\mp i)^{n}}{n!}\lim_{\epsilon\to0^{+}}\int\limits_{\mathbb{R}}\mathrm{d}x\,\varphi^{(n)}(x)(\epsilon\pm ix)^{-1}
\end{split}
\end{equation}
Applying the well-known formula (Sokhotski–Plemelj) gives:\\
\begin{equation}
\label{Sokhotski_Plemelj}
\begin{split}
\lim_{\epsilon\to0^{+}}\int\limits_{\mathbb{R}}\mathrm{d}x\,\frac{\varphi(x)}{\epsilon\pm ix}=\pi\varphi(0)\mp i\,\mathrm{Vp}\frac{1}{x}
=\pi\delta\mp i\,\mathrm{Vp}\frac{1}{x}
\end{split}
\end{equation}
yields
\begin{equation*}
\begin{split}
\eqref{sokhotski}&=\frac{(\pm i)^{n}}{n!}\left[(-1)^{n}\,\pi\,\varphi^{(n)}(0)\mp i\,(-1)^{n}\, \mathrm{Vp}\left(\frac{1}{x}\right)[\varphi^{(n)}]\right]\\
&=\frac{(\pm i)^{n}}{n!}\left(\pi\,\mathscr{D}^{n}\delta\mp i\, \mathscr{D}^{n}\,\mathrm{Vp}\frac{1}{x}\right)
\end{split}
\end{equation*}
Next, we  prove the case: $\alpha\in \mathbb{R}\setminus \mathbb{N}_{0}$.
It is convenient to write $\alpha=[\alpha]+\gamma$ where $\gamma$ assumes a value between $0$ and $1$.
This leads to the expression:\\
\begin{equation*} 
\frac{1}{(\epsilon\pm ix)^{\alpha+1}}=\frac{1}{(\epsilon\pm ix)^{[\alpha]+1+\gamma}}
=\frac{(\pm i)^{[\alpha]+1}}{([\alpha]+1)!}\,\frac{\mathrm{d}^{[\alpha]+1}}{\mathrm{d}x^{[\alpha]+1}}\frac{1}{(\epsilon\pm ix)^{\gamma}}
\end{equation*}\\
Now, by Taylor expanding up to $1^{\mathrm{st}}$ order:\\
\begin{equation*}
\begin{split}
\frac{1}{(\epsilon\pm ix)^{\gamma}}=\frac{(\epsilon\pm ix)^{1-\gamma}}{(\epsilon\pm ix)}
=\frac{(1}{(\epsilon\pm ix)}\left[(\pm ix)^{1-\gamma}+\frac{1-\gamma}{(\pm ix)^{\gamma}}\epsilon+\mathcal{O}(\epsilon^2)\right]
\end{split}
\end{equation*}\\
In the limit $\epsilon\to0^{+}$ one gets for the first term:
\begin{equation}
\label{ableitung}
\mathscr{S}^{\prime}-\lim_{\epsilon\to0^{+}}\frac{(\pm ix)^{1-\gamma}}{\epsilon\pm ix}=(\pm ix)^{1-\gamma}\left[\pi\delta\mp i\,\mathrm{Vp}\frac{1}{x}\right]=\mathrm{Pf}\frac{1}{(\pm ix)^{\gamma}}
\end{equation}
where the identity $x^{1-\gamma}\,\delta[\varphi]=0$ has been used.
The $2^{\mathrm{nd}}$ term goes to zero in the limit $\epsilon\to0^{+}$ since:
\begin{equation*}
\mathscr{S}^{\prime}-\lim_{\epsilon\to0^{+}}\frac{\epsilon}{\epsilon\pm ix}=0
\end{equation*}\\
By taking the $([\alpha]+1)$th derivative of the right-hand-side of Eq.~\ref{ableitung} one obtains the claimed result.
\end{proof}
\begin{thm}
\label{theorem4}
Let $f(x)=x^{\alpha}$ (with $\alpha\in \mathbb{R}$) be the monomial which generates the regular distribution $\mathrm{Pf}\,x^{\alpha}$ then:\\
\begin{equation}
\label{formula_integer}
\mathscr{F}\,\mathrm{Pf}\,x^{\alpha}=\begin{cases}
2\pi \,i^{\alpha}\,\mathscr{D}^{\alpha}\delta[\varphi]&\text{if $\alpha\in\{0,1,2,3...\}$}\\
T_{g}[\varphi]&\text{if $\alpha\in\{-1,-2,-3...\}$}\\
T[\varphi]&\text{if $\alpha\in\mathbb{R}\setminus\mathbb{Z}$}
\end{cases}
\end{equation}
with 
\begin{equation}
\label{formula_power_negative}
g(x)=\pi\frac{i^{\alpha}\,x^{-\alpha-1}}{(-\alpha-1)!}\,\mathrm{sgn}{(x)}
\end{equation}
and
\begin{equation}
T=\Gamma(\alpha+1)\,\left(\mathrm{e}^{-i\pi\alpha}\,\mathrm{Pf}\,\frac{1}{(-ix)^{\alpha+1}}+\mathrm{Pf}\,\frac{1}{(ix)^{\alpha+1}}\right)
\end{equation}
$\Gamma(.)$ denotes the Gamma function defined in $\mathbb{C}\setminus\{0,-1,-2,-3...\}$.
\end{thm}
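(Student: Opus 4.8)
The plan is to cut $\mathrm{Pf}\,x^{\alpha}$ into the two half-line pieces selected by the fixed logarithmic branch and to recognise each of their Fourier transforms as a Hadamard-regularised $\Gamma$-integral. Since the branch gives $x^{\alpha}=|x|^{\alpha}\mathrm{e}^{-i\pi\alpha}$ for $x<0$, the distribution $\mathrm{Pf}\,x^{\alpha}$ equals its restriction to $x>0$ plus $\mathrm{e}^{-i\pi\alpha}$ times its restriction to $x<0$. Inserting a convergence factor $\mathrm{e}^{-\epsilon t}$ and using the ($\mathrm{Pf}$-regularised) identity $\int_{0}^{\infty}t^{\alpha}\mathrm{e}^{-(\epsilon\pm ix)t}\,\mathrm{d}t=\Gamma(\alpha+1)(\epsilon\pm ix)^{-\alpha-1}$ collapses the whole computation to
\begin{equation*}
\mathscr{F}\,\mathrm{Pf}\,x^{\alpha}=\Gamma(\alpha+1)\left[\,\mathscr{S}^{\prime}\text{-}\lim_{\epsilon\to0^{+}}\frac{1}{(\epsilon+ix)^{\alpha+1}}+\mathrm{e}^{-i\pi\alpha}\,\mathscr{S}^{\prime}\text{-}\lim_{\epsilon\to0^{+}}\frac{1}{(\epsilon-ix)^{\alpha+1}}\right],
\end{equation*}
whereupon the two $\mathscr{S}^{\prime}$-limits are supplied verbatim by Lemma~\ref{lemma}. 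This single master identity is designed to yield all three cases at once.

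For $\alpha=n\in\{0,1,2,\dots\}$ I would insert the first branch of Lemma~\ref{lemma}. Using $\Gamma(n+1)=n!$, $\mathrm{e}^{-i\pi n}=(-1)^{n}$ and the identity $(-1)^{n}(-i)^{n}=i^{n}$, the two $\mathrm{Vp}$ contributions cancel while the two $\delta$ contributions add, leaving $\mathscr{F}\,\mathrm{Pf}\,x^{n}=2\pi\,i^{n}\mathscr{D}^{n}\delta$. For $\alpha\in\mathbb{R}\setminus\mathbb{Z}$ the Gamma factor is finite and the second branch of the Lemma turns the two limits into $\mathrm{Pf}\,(ix)^{-\alpha-1}$ and $\mathrm{Pf}\,(-ix)^{-\alpha-1}$; substituting reproduces $T$ exactly. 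Thus the first and third cases are immediate.

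The delicate case, and the one I expect to be the main obstacle, is $\alpha=-m$ with $m\in\{1,2,3,\dots\}$. There the master identity becomes a genuine $0\cdot\infty$ indeterminate: $\Gamma(\alpha+1)=\Gamma(1-m+\delta)$ carries a simple pole of residue $(-1)^{m-1}/(m-1)!$ as $\delta\to0$, whereas at $\alpha=-m$ the bracket degenerates to the entire function $(ix)^{m-1}+(-1)^{m}(-ix)^{m-1}=0$ and hence vanishes. The value at a negative integer must therefore be recovered as the limit $\alpha\to-m$ taken through $\mathbb{R}\setminus\mathbb{Z}$, i.e. as the residue multiplied by the first-order variation in $\alpha$ of the bracket. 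Differentiating the regularised powers $(\pm ix)^{-\alpha-1}$ with respect to $\alpha$ brings down $\log(\pm ix)$ factors inside $\mathscr{S}^{\prime}$, and controlling this parameter-derivative simultaneously with the pole is precisely the hard part; the outcome should collapse to $T_{g}$ with $g$ as in \eqref{formula_power_negative}.

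To sidestep that limit I would in fact prefer an independent route for the negative integers, which also serves as a check: the relation $\mathrm{Pf}\,x^{-m}=\frac{(-1)^{m-1}}{(m-1)!}\,\mathscr{D}^{m-1}\mathrm{Vp}\,\frac{1}{x}$, which generalises Eq.~\eqref{derivative_pv_1_x} and follows by induction, combined with the standard $\mathscr{F}\,\mathrm{Vp}\,\frac{1}{x}=-i\pi\,\mathrm{sgn}$ and the transform rule $\mathscr{F}\mathscr{D}^{m-1}=(ix)^{m-1}\mathscr{F}$. A short bookkeeping of powers of $i$ then gives $\frac{(-1)^{m-1}}{(m-1)!}(ix)^{m-1}(-i\pi\,\mathrm{sgn})=\pi\,\frac{i^{-m}x^{m-1}}{(m-1)!}\,\mathrm{sgn}$, which is exactly $g$ at $\alpha=-m$ and confirms the value extracted from the master identity.
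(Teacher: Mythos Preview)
Your proposal is correct and follows essentially the same route as the paper: regularise with $\mathrm{e}^{-\epsilon|x|}$, reduce the Fourier integral to $\Gamma(\alpha+1)\bigl[\mathrm{e}^{-i\pi\alpha}(\epsilon-ix)^{-\alpha-1}+(\epsilon+ix)^{-\alpha-1}\bigr]$, and then invoke Lemma~\ref{lemma} for the $\epsilon\to0^{+}$ limit; for the negative integers you use precisely the paper's independent argument via $\mathscr{D}^{m-1}\mathrm{Vp}\,\tfrac{1}{x}$ and $\mathscr{F}\,\mathrm{Vp}\,\tfrac{1}{x}=-i\pi\,\mathrm{sgn}$. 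The only cosmetic difference is that you obtain the $\Gamma$-formula by splitting into half-lines and quoting the Euler integral, whereas the paper first evaluates the integer case by residues and then analytically continues---but this is the same computation.
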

\begin{proof}
The case
{$\alpha\in\{-1,-2,-3...\}$} can be proven easily by starting with the well-known identity (corresponding to the value $\alpha=-1$):
\begin{equation}
\label{fourier_1_over_x}
\mathscr{F}\,\mathrm{Vp}\frac{1}{x}=-i\pi\,\mathrm{sgn}[\varphi]
\end{equation}
where the \emph{signum} distribution $\mathrm{sgn}[\varphi]$ is defined by:
\begin{equation*}
\mathrm{sgn}[\varphi]:=\int\limits_{\mathbb{R}}\varphi(x)\,\mathrm{sgn}(x)\mathrm{d}x
\end{equation*} 
A short proof of Eq.~\ref{fourier_1_over_x} can be found in App.~\ref{app1}.\\
\\
Next, we consider:
\begin{equation}
\label{insert_equation}
\mathscr{F}\mathscr{D}^{n}\mathrm{Vp}\,\frac{1}{x}=(ix)^n \mathscr{F}\,\mathrm{Vp}\,\frac{1}{x}
\end{equation}
By inserting the following well-known equation into Eq.~\ref{insert_equation}:
\begin{equation*}
\mathscr{D}^{n}\,\mathrm{Vp}\,\frac{1}{x}=(-1)^{n}\,n!\,\mathrm{Pf}\,\frac{1}{x^{n+1}}
\end{equation*}
one immediately obtains Eq.~\ref{formula_power_negative}.\\
In the next step we prove Theorem \ref{theorem4} for the case $\alpha \in\mathbb{R}\setminus\{-1,-2,\cdots\}$ essentially by applying the standard technique of analytic continuation.
\\
In a first step, we assume $\alpha$ to be larger than minus one. We consider then the following
well-defined (unambigous within the chosen logarithmic branch) classical Fourier transform:
\begin{equation}
\label{limit}
\mathscr{F}x^{\alpha}\mathrm{e}^{-\epsilon|x|}:=\int\limits_{\mathbb{R}}\mathrm{d}x\,x^{\alpha}\,\mathrm{e}^{-\epsilon|x|}\,\hat{\varphi}(x)
\end{equation}
Which equals:
\begin{equation}
\mathscr{F}x^{\alpha}\mathrm{e}^{-\epsilon|x|}=\int\limits_{\mathbb{R}}\mathrm{d}x\,x^{\alpha}\,\mathrm{e}^{-\epsilon|x|}\int\limits_{\mathbb{R}}\mathrm{d}k\,\varphi(k)\,\mathrm{e}^{-ikx}
\end{equation}
By Fubini's Theorem we get:
\begin{equation}
\label{int}
\mathscr{F}x^{\alpha}\mathrm{e}^{-\epsilon|x|}=\int\limits_{\mathbb{R}}\mathrm{d}k\,\varphi(k)\int\limits_{\mathbb{R}}\mathrm{d}x\,x^{\alpha}\,\mathrm{e}^{-ikx-\epsilon|x|}
\end{equation}
First, we evaluate the integral for the case where $\alpha$ is a positive integer $n$. Application of Cauchy's Residue Theorem gives:
\begin{equation*}
\int\limits_{\mathbb{R}}\mathrm{d}x\,x^{n}\,\mathrm{e}^{-ikx-\epsilon|x|}
=\Gamma(n+1)\,\left[\mathrm{e}^{-i\pi n}\,(\epsilon-ik)^{-n-1}+(\epsilon+ik)^{-n-1}\right]
\end{equation*}
Due to uniqueness of the analytic continuation one obtains a formula for the extended case $\alpha>-1$:
\begin{equation}
\label{express}
\int\limits_{\mathbb{R}}\mathrm{d}x\,x^{\alpha}\,\mathrm{e}^{-ikx-\epsilon|x|}=\Gamma(\alpha+1)\,\left[\mathrm{e}^{-i\pi\alpha}\,(\epsilon-ik)^{-\alpha-1}+(\epsilon+ik)^{-\alpha-1}\right]
\end{equation}
Next, we insert the right-hand side of Eq.~\ref{express} into Eq.~\ref{int} and perform the following limit: 
\begin{equation}
\label{schluss}
\lim_{\epsilon\to0^{+}}\mathscr{F}x^{\alpha}\mathrm{e}^{-\epsilon|x|}=\Gamma(\alpha+1)\lim_{\epsilon\to0^{+}}\int\limits_{\mathbb{R}}\mathrm{d}k\,\varphi(k)
\,\left[\mathrm{e}^{-i\pi\alpha}\,(\epsilon-ik)^{-\alpha-1}+(\epsilon+ik)^{-\alpha-1}\right]
\end{equation}
The limit in Eq.~\ref{schluss} can be evaluated by applying Lemma \ref{lemma}. The range of validity is (again by analytic continuation) extended to $\alpha \in\mathbb{R}\setminus\{-1,-2,\cdots\}$ which leads to the statement of Theorem \ref{theorem4}.
\end{proof}
A short calculation shows furthermore:
\begin{equation*}
\mathscr{F}\mathscr{F}\,\mathrm{Pf}\,x^{\alpha}=2\pi\,\mathrm{Pf}\,(-x)^{\alpha}
\end{equation*}
\begin{cor}
In the specific case of half-integer exponents: $\alpha=n+\frac{1}{2}$ with $n\in\mathbb{Z}$ one obtains the formula:
\begin{equation}
\label{formula_I}
\mathscr{F}\,\mathrm{Pf}\,x^{n+\frac{1}{2}}=(n+\frac{1}{2})\,\Gamma(n+\frac{1}{2})\left[-i\,(-1)^{n}\,\mathrm{Pf}\,(-ix)^{-n-\frac{3}{2}}+\mathrm{Pf}\,(ix)^{-n-\frac{3}{2}}\right]
\end{equation}
where for non-negative integer values of $n$ one can apply the well-known formulae:
\begin{equation*}
\begin{split}
\Gamma(n+\frac{1}{2})&=\frac{(2n!)}{4^{n}n!}\sqrt{\pi}\\
\Gamma(\frac{1}{2}-n)&=\frac{(-4)^{n}n!}{(2n)!}\sqrt{\pi}
\end{split}
\end{equation*}
An analogous calculation yields the following formula: 
\begin{equation}
\label{formula_II}
\mathscr{F}\mathrm{Pf}\,|x|^{n+\frac{1}{2}}=\Gamma(\frac{1}{2}-n)\left[\mathrm{Pf}\,(-ix)^{n-\frac{1}{2}}+\mathrm{Pf}\,(ix)^{n-\frac{1}{2}}\right]
\end{equation}
\end{cor}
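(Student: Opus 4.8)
The plan is to obtain both identities as specializations of the transform already computed, the only genuinely new ingredient being the $|x|^{\alpha}$ analog needed for \eqref{formula_II}. For \eqref{formula_I} I would simply insert the half-integer value $\alpha=n+\tfrac12$, which lies in $\mathbb{R}\setminus\mathbb{Z}$, into the third branch of Theorem \ref{theorem4}. Three elementary reductions then finish the computation: first, the functional equation $\Gamma(z+1)=z\Gamma(z)$ with $z=n+\tfrac12$ turns $\Gamma(\alpha+1)=\Gamma(n+\tfrac32)$ into the prefactor $(n+\tfrac12)\Gamma(n+\tfrac12)$; second, within the chosen logarithmic branch $\mathrm{e}^{-i\pi\alpha}=\mathrm{e}^{-i\pi(n+1/2)}=-i(-1)^{n}$; third, the exponent becomes $-\alpha-1=-n-\tfrac32$. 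Substituting these reproduces \eqref{formula_I}, and the half-integer Gamma values quoted afterwards are then just the standard closed forms obtained from the functional equation and the reflection formula.

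For \eqref{formula_II} the function $|x|^{\alpha}$ is not covered by Theorem \ref{theorem4}, so I would rerun the proof of that theorem with $x^{\alpha}$ replaced by $|x|^{\alpha}$. Everything is identical up to the regularized integral $\int_{\mathbb{R}}|x|^{\alpha}\mathrm{e}^{-ikx-\epsilon|x|}\,\mathrm{d}x$. Splitting it at the origin, the contribution of $x>0$ is unchanged and yields $\Gamma(\alpha+1)(\epsilon+ik)^{-\alpha-1}$; on $x<0$, however, one writes $|x|^{\alpha}=(-x)^{\alpha}$ with $-x>0$, so that, unlike the case of $x^{\alpha}$, no branch phase $\mathrm{e}^{-i\pi\alpha}$ is produced and one simply obtains $\Gamma(\alpha+1)(\epsilon-ik)^{-\alpha-1}$. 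Passing to the $\mathscr{S}^{\prime}$-limit by Lemma \ref{lemma} exactly as before gives
\[
\mathscr{F}\,\mathrm{Pf}\,|x|^{\alpha}=\Gamma(\alpha+1)\left[\mathrm{Pf}\,\frac{1}{(-ix)^{\alpha+1}}+\mathrm{Pf}\,\frac{1}{(ix)^{\alpha+1}}\right].
\]
Evaluating this at the relevant half-integer order, so that $-\alpha-1=n-\tfrac12$ and $\Gamma(\alpha+1)=\Gamma(\tfrac12-n)$, yields \eqref{formula_II}.

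The bookkeeping of phases and Gamma arguments is routine; the one delicate point---and precisely the place where the two formulae diverge---is the treatment of the negative half-axis under the fixed logarithmic branch. I expect the main obstacle to be arguing cleanly that $|x|^{\alpha}$ avoids the factor $\mathrm{e}^{-i\pi\alpha}$ while $x^{\alpha}$ incurs it, and then re-justifying the analytic continuation from the convergent regime $\alpha>-1$ down to the half-integer in question, together with the application of Lemma \ref{lemma} that legitimizes the distributional limit. Once these are secured, both specializations are purely computational.
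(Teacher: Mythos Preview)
Your approach is correct and is precisely what the paper does: formula \eqref{formula_I} is the direct specialization of Theorem~\ref{theorem4} at $\alpha=n+\tfrac12$ using exactly the three reductions you list, and for \eqref{formula_II} the paper only says ``an analogous calculation,'' which is your rerun of the argument with $|x|^{\alpha}$ in place of $x^{\alpha}$, the sole change being the absence of the branch phase $\mathrm{e}^{-i\pi\alpha}$ on the negative half-axis. One caveat: your chosen specialization $-\alpha-1=n-\tfrac12$ (i.e.\ $\alpha=-n-\tfrac12$) matches the right-hand side of \eqref{formula_II} but not its left-hand side $|x|^{n+1/2}$; this is an indexing slip in the paper's statement, harmless since $n\in\mathbb{Z}$ is arbitrary and the substitution $n\mapsto -n-1$ reconciles the two sides.
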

In Table~\ref{tabelle} formulae \eqref{formula_I}, \eqref{formula_II} are evaluated for some values of $n$. Note that we use the convention: $-1!!=0!!:=1$. For brevity, the sign \emph{Pf} is omitted in front of the entries.
 \begin{table}[h!]
\renewcommand{\arraystretch}{1.8}  
	\centering
	\subfloat[FT of $x^{n+\frac{1}{2}}$]{
		\begin{tabular}{|c|c|} 
				 \hline
 $T$ & Fourier Transform \\ [1.5ex] 
 \hline\hline 
$x^{-\frac{7}{2}}$ & $-\frac{2^4\sqrt{\pi}}{5!!}H(-x)(-i|x|)^{+5/2}$\\
 \hline
$x^{-\frac{5}{2}}$ & $\frac{2^3\sqrt{\pi}}{3!!}H(-x)(-i|x|)^{+3/2}$\\ 
 \hline
$x^{-\frac{3}{2}}$ & $-\frac{2^2\sqrt{\pi}}{1!!}H(-x)(-i|x|)^{+1/2}$\\ 
 \hline
 $x^{-\frac{1}{2}}$ & $\frac{2^1\sqrt{\pi}}{0!!}H(-x)(-i|x|)^{-1/2}$\\ 
 \hline
 $x^{+\frac{1}{2}}$ & $\frac{1!!}{2^0\sqrt{\pi}}H(-x)(-i|x|)^{-3/2}$ \\
 \hline
 $x^{+\frac{3}{2}}$ & $\frac{3!!\sqrt{\pi}}{2^1}H(-x)(-i|x|)^{-5/2}$ \\
 \hline
 $x^{+\frac{5}{2}}$ & $\frac{5!!\sqrt{\pi}}{2^2}H(-x)(-i|x|)^{-7/2}$ \\
 \hline
 $x^{+\frac{7}{2}}$ & $\frac{7!!\sqrt{\pi}}{2^3}H(-x)(-i|x|)^{-9/2}$ \\
 \hline
	  \end{tabular}
	}	
	\qquad
	\subfloat[FT of $|x|^{n+\frac{1}{2}}$]{
		\begin{tabular}{|c|c|} 
				\hline
 $T$ & Fourier Transform \\ [1.5ex] 
 \hline\hline
$|x|^{-\frac{7}{2}}$ & $+\sqrt{2\pi}\frac{2^3}{5!!}\,|x|^{+5/2}$\\ 
 \hline
$|x|^{-\frac{5}{2}}$ & $-\sqrt{2\pi}\frac{2^2}{3!!}\,|x|^{+3/2}$\\ 
 \hline
$|x|^{-\frac{3}{2}}$ & $-\sqrt{2\pi}\frac{2^1}{1!!}\,|x|^{+1/2}$\\ 
 \hline
 $|x|^{-\frac{1}{2}}$ & $+\sqrt{2\pi}\frac{2^0}{(-1)!!}\,|x|^{-1/2}$\\ 
 \hline
 $|x|^{+\frac{1}{2}}$ & $-\sqrt{2\pi}\,\frac{1!!}{2^1}|x|^{-3/2}$ \\
 \hline
 $|x|^{+\frac{3}{2}}$ & $-\sqrt{2\pi}\,\frac{3!!}{2^2}|x|^{-5/2}$ \\
 \hline
 $|x|^{+\frac{5}{2}}$ & $+\sqrt{2\pi}\,\frac{5!!}{2^3}|x|^{-7/2}$ \\
 \hline
 $|x|^{+\frac{7}{2}}$ & $+\sqrt{2\pi}\,\frac{7!!}{2^4}|x|^{-9/2}$ \\
 \hline
    \end{tabular}
	}
	\caption{Distributional Fourier transform of (a) $x^{n+\frac{1}{2}}$, (b) $|x|^{n+\frac{1}{2}}$ }
\label{tabelle}
\end{table}
\subsection{Fourier transform of tempered distributions generated by rational functions}
Based on the explicit evaluation of $\mathscr{F}\,\mathrm{Pf}\frac{1}{x^n}$ ($n\in \mathbb{Z}$) one can immediately derive the distributional Fourier transform of a tempered distribution which is generated by an arbitrary rational function $R(x)=\frac{P_n(x)}{Q_m(x)}$ where  $P_n(x),Q_m(x)$ are Polynomials (with complex coefficients) of order $n$, m respectively.
Such a general statement goes beyond the possibility of classical analysis where the formulation is usually given in the following form:\\
\begin{rem}
Let $f(z)=R(z)\mathrm{e}^{-ikz}$ where $R(z)$ is a rational function with simple poles at
$p_1<\cdots<p_n \in \mathbb{R}$, $n\in \mathbb{N}_{0}$ and no further poles on $\mathbb{R}$.
Moreover, $\lim_{z\to\infty}R(z)=0$. Then
\begin{equation*}
\mathrm{Vp}\int\limits_{\mathbb{R}}R(x)\mathrm{e}^{-ikx}\mathrm{d}x=\pi i \sum_{l=1}^{n}\mathrm{Res}_{p_l}f(z)+
\begin{cases}
-2\pi i\sum_{\mathrm{Im}a>0}\mathrm{Res}_{a}f(z)&\text{if $k>0$}\\
2\pi i\sum_{\mathrm{Im}a>0}\mathrm{Res}_{a}f(z)&\text{if $k<0$}
\end{cases}
\end{equation*}
In the context of tempered distributions the following general statement is possible. 
\end{rem}
\begin{cor}
Let $R(z)=\frac{P(z)}{Q(z)}$ be a rational (complex-valued) function ($Q\neq0$, $\deg{P}=n$, $\deg{Q}=m$) with poles at $z_1,\,z_2,\,\dots, z_n \in \mathbb{C}$. The variable $m_i$ denotes the multiplicity of the corresponding pole $z_i$.
\\Let $R(x)=\frac{P(x)}{Q(x)}$ be the function which generates the distribution $T_{R}[\varphi]\equiv \mathrm{Pf}\,R$.\\
\\
Then
\begin{equation*}
\mathscr{F}T_{R}[\varphi]=\mathscr{F}T_{P^{\star}}[\varphi]+\sum_{i=1}^{n}\sum_{j=1}^{m_i}a_{ij}\,\mathscr{F}\mathrm{Pf}\,(x-z_{i})^{-j},\,\quad a_{ij}\in\mathbb{C}
\end{equation*}
where
\begin{equation*}
\mathscr{F}T_{P^{\star}}[\varphi]=2\pi\sum_{k=0}^{n-m}\,b_k\, i^k\mathscr{D}^{k}\delta[\varphi]\,,\quad b_k\in\mathbb{C}
\end{equation*}
is the Fourier transform of the Polynom:
\begin{equation*}
P^{\star}(x)=\sum_{k=0}^{n-m}b_k\, x^k\,,\quad b_k\in\mathbb{C}
\end{equation*}
and
\begin{equation*}
\mathscr{F}\mathrm{Pf}\,(x-z_{i})^{-j}=T_{g}[\varphi]
\end{equation*}
The function $g(x)$ in the latter expression is given by:
\begin{equation*}
\label{formula_g}
g(x)=\frac{2\pi i^j}{(j-1)!}\,|x|^{j-1}\,
\mathrm{e}^{-i\operatorname{Re}{(z_{i})}x}
\mathrm{e}^{-\operatorname{Im}{(z_{i})}|x|}\left[H(\operatorname{Im}{(z_{i})})H(-x)+(-1)^{j}H(-\operatorname{Im}{(z_{i})})H(x)\right]
\end{equation*}
Where $H(\cdot)$ denotes Heaviside's unit step function defined by:
\begin{equation*}
H(x)=\begin{cases}
1&\text{if} \quad x>0\\
\frac{1}{2}&\text{if} \quad x=0\\
0&\text{if} \quad x<0
\end{cases}
\end{equation*}
The expressions $\operatorname{Re}{(z_{i})}$, $\operatorname{Im}{(z_{i})}$ denote the real and imaginary part of $z_i$. 
\end{cor}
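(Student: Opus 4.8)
The plan is to reduce everything to the single-pole transforms that Theorem~\ref{theorem4} and Lemma~\ref{lemma} already put within reach, using only the algebra of partial fractions together with the linearity, translation and differentiation rules of the distributional Fourier transform. First I would divide $P$ by $Q$ and expand the proper remainder in partial fractions, obtaining the purely algebraic identity $R(x)=P^{\star}(x)+\sum_{i=1}^{n}\sum_{j=1}^{m_i}a_{ij}\,(x-z_i)^{-j}$ with $\deg P^{\star}=n-m$ (and $P^{\star}\equiv0$ when $n<m$) and Laurent coefficients $a_{ij}\in\mathbb{C}$. I would then check that this identity lifts to $\mathscr{S}^{\prime}(\mathbb{R})$: a summand $(x-z_i)^{-j}$ attached to a non-real pole is a smooth, polynomially decaying function needing no regularization, whereas near a real pole the principal part $\sum_j a_{ij}(x-z_i)^{-j}$ is precisely the piece that $\mathrm{Pf}$ turns into a tempered distribution, the complementary term being locally integrable. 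Since both $\mathrm{Pf}$ and $\mathscr{F}$ are linear, $\mathscr{F}\,\mathrm{Pf}\,R$ splits exactly as in the statement. The polynomial part is then immediate: applying the positive-integer case $\mathscr{F}\,\mathrm{Pf}\,x^{k}=2\pi i^{k}\mathscr{D}^{k}\delta$ of Theorem~\ref{theorem4} term by term gives $\mathscr{F}T_{P^{\star}}=2\pi\sum_{k=0}^{n-m}b_k\,i^{k}\mathscr{D}^{k}\delta$.

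The core is thus $\mathscr{F}\,\mathrm{Pf}\,(x-z_i)^{-j}$. Writing $z_i=a+ib$ with $a=\operatorname{Re}(z_i)$ and $b=\operatorname{Im}(z_i)$, I would first strip off the real part by the translation rule $\mathscr{F}[h(\cdot-a)]=\mathrm{e}^{-iax}\mathscr{F}h$, which is the origin of the factor $\mathrm{e}^{-i\operatorname{Re}(z_i)x}$, reducing matters to the pole $ib$. Next I would collapse the multiplicity to $j=1$ by combining $(x-z)^{-j}=\frac{(-1)^{j-1}}{(j-1)!}\mathscr{D}^{j-1}(x-z)^{-1}$ with the differentiation rule $\mathscr{F}\mathscr{D}^{j-1}T=(ix)^{j-1}\mathscr{F}T$ already used in Eq.~\eqref{insert_equation}; on the half-line where the transform is supported this converts $x^{j-1}$ into $|x|^{j-1}$ and, together with the base factor, assembles the overall $\frac{2\pi i^{j}}{(j-1)!}|x|^{j-1}$. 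For the base case $j=1$ I would distinguish $b\neq0$ from $b=0$. When $b\neq0$ the function $(x-ib)^{-1}$ is smooth and decaying, so its transform is the classical contour integral; Jordan's lemma closes the contour in the half-plane selected by the sign of the dual variable and captures the residue only when the pole lies inside, yielding a one-sided, exponentially decaying transform supported on $x<0$ for $b>0$ and on $x>0$ for $b<0$. This is exactly what produces the selectors $H(\operatorname{Im}(z_i))H(-x)$ and $H(-\operatorname{Im}(z_i))H(x)$, the sign $(-1)^{j}$, and the exponential factor of $g$. When $b=0$ the pole is real and I would instead quote the negative-integer case of Theorem~\ref{theorem4}; the value $H(0)=\tfrac12$ then splits the transform symmetrically and recovers the signum factor of Eq.~\eqref{formula_power_negative}, which is the $b\to0$ limit of the two-sided formula.

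Recombining the translation factor, the $(ix)^{j-1}$ from the differentiation rule, and the one-sided base transform into the single closed form for $g$, and summing over $i$ and $j$, finishes the argument. The genuine obstacle is not any analytic limit but the \emph{uniform sign bookkeeping} of the single-pole step: keeping the powers of $i$, the factor $(-1)^{j}$, the contour orientation, and the fixed logarithmic branch of the Preliminaries mutually consistent, so that the upper-half-plane, lower-half-plane and real-pole cases, together with the $|x|^{j-1}$ generated by differentiation, collapse onto the one displayed expression for $g$. The remaining points—convergence of the contour integrals, the lift of the partial-fraction identity to $\mathscr{S}^{\prime}(\mathbb{R})$, and continuity as $b\to0$—are routine given Lemma~\ref{lemma} and Theorem~\ref{theorem4}.
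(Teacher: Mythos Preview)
Your plan is correct and follows the same overall strategy as the paper: partial-fraction decomposition, Theorem~\ref{theorem4} for the polynomial part, translation to strip off $\operatorname{Re}(z_i)$, and residue/contour arguments for the single-pole terms, with the real-pole case handled via Eq.~\eqref{formula_power_negative} and then checked to agree with the general formula at $\operatorname{Im}(z_i)=0$.

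The one organizational difference is how higher multiplicities are handled. You reduce every $j$ to the base case $j=1$ via $(x-z)^{-j}=\tfrac{(-1)^{j-1}}{(j-1)!}\mathscr{D}^{j-1}(x-z)^{-1}$ together with $\mathscr{F}\mathscr{D}^{j-1}T=(ix)^{j-1}\mathscr{F}T$, and then compute only the $j=1$ transform by contour integration. The paper instead evaluates the classical integral $\int_{\mathbb{R}}(x-z_i)^{-j}e^{-ikx}\,\mathrm{d}x$ directly by the residue theorem for all $j\geq2$ (where it converges absolutely), and singles out $j=1$ as the case needing distributional care, rewriting the integrand as $(x+i\operatorname{Im}(z_i))/(x^{2}+|z_i|^{2})$ before integrating. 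Your route has the advantage of a single base computation and makes the factor $\tfrac{2\pi i^{j}}{(j-1)!}|x|^{j-1}$ drop out mechanically; the paper's route avoids your sign-tracking step but requires the general-order residue computation. Either way the content is the same.
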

\begin{proof}
Through a polynomial division and partial fraction decomposition one may write:
\begin{equation}
\label{double_sum}
R(x)=P^{\star}(x)+\sum_{i=1}^{n}\sum_{j=1}^{m_i}\frac{a_{ij}}{(x-z_i)^j}\,,\quad a_{ij}\in\mathbb{C}
\end{equation}
where: 
\begin{equation*}
P^{\star}(x)=\sum_{k=0}^{n-m}b_k\, x^k\,,\quad b_k\in\mathbb{C}
\end{equation*}
Applying Theorem \ref{theorem4} gives the Fourier transform of the Polynom $P^{\star}$:
\begin{equation*}
\mathscr{F}T_{P^{\star}}[\varphi]=2\pi\sum_{k=0}^{n-m}b_k\, i^k\mathscr{D}^{k}\delta[\varphi]\,,\quad b_k\in\mathbb{C}
\end{equation*}
In the next step we have to evaluate the Fourier transform of the function $f(x)=(x-z_{i})^{-j}$ occurring in Eq.~\ref{double_sum}.\\
In order to do that it is convenient to distinguish between
$z_i\in \mathbb{R}$ and $z_i\in \mathbb{C}\setminus\mathbb{R}$.
\\
{case $z_i=x_i\in \mathbb{R}$}:\\
\\
A short calculation (using Eq.~\ref{formula_power_negative}) yields:
\begin{equation}
\label{hello}
\mathscr{F}\mathrm{Pf}\,(x-x_{i})^{-j}=\mathrm{e}^{-ixx_i}\mathscr{F}\mathrm{Pf}\,x^{-j}=\pi\frac{i^{-j}\,x^{j-1}}{(j-1)!}\,\mathrm{sgn}{(x)}\,\mathrm{e}^{-ixx_i}
\end{equation}
{case $z_i\in \mathbb{C}\setminus\mathbb{R}$}:\\
\\
The Fourier integral in the classical sense:
\begin{equation} 
\label{fourier_integral}
\hat{f}(k)=\int\limits_{\mathbb{R}}\frac{\mathrm{e}^{-ikx}}{(x-z_{i})^{j}}\mathrm{d}x
\end{equation}
is convergent for every pair $\{j,k\}\in \{\mathbb{N}\times\mathbb{R}\}\setminus\{1,0\}$. 
For $j\in\{2, 3,\cdots\}$ it is therefore possible to evaluate the Fourier transform \eqref{fourier_integral} classically. 
A standard calculation by using the Residue Theorem \cite{ahlfors:78} gives:
\\
\begin{equation}
\label{formula_rational}
\hat{f}(k):=\frac{2\pi i^j}{(j-1)!}|k|^{j-1}
\mathrm{e}^{-i\operatorname{Re}{(z_{i})}k}
\mathrm{e}^{-\operatorname{Im}{(z_{i})}|k|}\left[H(\operatorname{Im}{(z_{i})})H(-k)+(-1)^{j}H(-\operatorname{Im}{(z_{i})})H(k)\right]
\end{equation}
The case $j=1$ demands a separate discussion. The \emph{translation property} of Fourier transforms holds also for tempered distributions. A shift by an amount of $\operatorname{Re}{(z_{i})}$ gives:
\begin{equation*}
\mathscr{F}\mathrm{Pf}\,(x-z_{i})^{-1}=\mathrm{e}^{-i\,\operatorname{Re}{(z_{i})}}\mathscr{F}\mathrm{Pf}\,[x-i\operatorname{Im}{(z_{i})}]^{-1}
\end{equation*}
Rewriting the latter expression in integral form ($\mathrm{Pf}$ can be replaced by $\mathrm{Vp}$) yields: 
\begin{equation}
\label{case_j_1}
\mathscr{F}\mathrm{Pf}\,(x-z_{i})^{-1}=\mathrm{e}^{-i\,\operatorname{Re}{(z_{i})}}\,\mathrm{Vp}\int\limits_{\mathbb{R}}\frac{\hat{\varphi}(x)}{x-i\operatorname{Im}{(z_{i})}}\mathrm{d}x
\end{equation}
A short calculation by a reformulation of the integrand leads to the following expression:
\begin{equation*}
\mathscr{F}\mathrm{Pf}\,(x-z_{i})^{-1}
=\mathrm{e}^{-i\,\operatorname{Re}{(z_{i})}}\int\limits_{\mathbb{R}}\mathrm{d}k\,{\varphi}(k)\,\mathrm{Vp}\int\limits_{\mathbb{R}}\frac{x+i\operatorname{Im}{(z_{i})}}{x^2+|z_{i}|^2}\mathrm{e}^{-ikx}\mathrm{d}x
\end{equation*}
A short calculation (again, using contour integration) yields:
\begin{equation*}
\mathrm{Vp}\int\limits_{\mathbb{R}}\frac{x+i\operatorname{Im}{(z_{i})}}{x^2+|z_{i}|^2}\mathrm{e}^{-ikx}\mathrm{d}x=2\pi i\,
\mathrm{e}^{-\operatorname{Im}{(z_{i})}|k|}\left[H(\operatorname{Im}{(z_{i})})H(-k)-H(-\operatorname{Im}{(z_{i})})H(k)\right]
\end{equation*}
which is identical to formula \ref{formula_rational} for $j=1$. Hence, Formula       
 \ref{formula_rational} covers also the case $j=1$.
Moreover, the function $g(x)$ coincides with the right-hand side of Eq.~\ref{hello} even in the case $\Im(z_i)=0$. The function $g(x)$ represents therefore the correct generator for $\mathscr{F}\mathrm{Pf}\,(x-z_{i})^{-j}$ for every complex pole $z_{i}\in \mathbb{C}$.
\end{proof} 
\begin{table}[h!]
\renewcommand{\arraystretch}{1.8}  
\centering

		\begin{tabular}{|c|c|} 
				 \hline
 Distribution & Fourier Transform \\ [1.5ex] 
 \hline\hline
 $(x^2-a^2)^{-1}$ & $-\frac{\pi}{a}\,\text{sgn}(ax)\sin{ax}$\\ 
 \hline
 $(x-1)^{-2}$ & $-\pi|x|\,\mathrm{e}^{-ix}$\\ 
 \hline
 $(x-i)^{-1}$& $2\pi i H(-x)\,\mathrm{e}^{-|x|}$\\ 
 \hline
 $\frac{x^2}{x-i}$& $2\pi i\,\delta^{\prime}+2\pi i\,\delta-2\pi i\,H(-x)\,\mathrm{e}^{-|x|}$\\ 
 \hline
	  \end{tabular}\\
\caption{FT of some rational distributions}
\end{table}
\subsection{Distributional Fourier transform of quantum statistics functions}
The present section is entirely expository. The results can be used for didactic purposes. By means of Hadamards - finite part we evaluate the distributional Fourier transform of functions well-known as Fermi-Dirac and Bose-Einstein quantum statistics. For simplicity, we assume a vanishing chemical potential: $\mu=0$. These functions read then:
\begin{equation}
\label{intro_ex}
f_{\pm}(x)=\frac{1}{\mathrm{e}^{\beta x}\pm1},\quad \beta\in\mathbb{R_{+}}
\end{equation}
Where ($+$) corresponds to Fermi-Dirac and ($-$) to Bose-Einstein. These functions can be turned into tempered distributions via the usual procedure. Its Fourier transforms are formally defined by:
\begin{equation}
\label{quantum_stat}
\mathscr{F}T_{f_{\pm}}[\varphi]:=\mathrm{Pf}\int\limits_{\mathbb{R}}\frac{\hat{\varphi}(x)}{\mathrm{e}^{\beta x}\pm1}\,\mathrm{d}x:=\mathrm{f.\,p.\,of}\lim_{\epsilon\to0^{+}}\int\limits_{|x|>\epsilon}\frac{\hat{\varphi}(x)}{\mathrm{e}^{\beta x}\pm1}\,\mathrm{d}x
\end{equation}
An explicit evaluation of Eq.~\ref{quantum_stat} is then given by the following theorem.
\begin{thm}
\label{Theorem_quantum_statistics}
\begin{equation}
\label{formula_quantum_stat}
\mathscr{F}T_{f_{\pm}}[\varphi]=\pm\pi\delta[\varphi]+\begin{cases}
T_{g}[\varphi]&(+)\\
T_{h}[\varphi]&(-)\\
\end{cases}
\end{equation}
where 
\begin{equation}
g(x)=\frac{i\pi}{\beta\sinh{\frac{\pi x}{\beta}}}\quad\mathrm{and}\quad h(x)=\frac{\pi}{i\beta}\coth{\frac{\pi x }{\beta}}
\end{equation}
\end{thm}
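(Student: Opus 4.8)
The plan is to reduce everything to the Fourier transforms of $\tanh$ and $\coth$ by first splitting off the non-decaying constant tail. The elementary identities
\begin{equation*}
f_+(x)=\frac{1}{\mathrm{e}^{\beta x}+1}=\frac12-\frac12\tanh\frac{\beta x}{2},\qquad f_-(x)=\frac{1}{\mathrm{e}^{\beta x}-1}=-\frac12+\frac12\coth\frac{\beta x}{2}
\end{equation*}
show that the only obstruction to ordinary integrability is the constant $\pm\frac12$, whose distributional Fourier transform is $\pm\pi\delta$ (the case $\alpha=0$ of Theorem~\ref{theorem4}, where $\mathscr{F}\,\mathrm{Pf}\,x^{0}=2\pi\delta$). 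The remaining odd pieces $\mp\frac12\tanh\frac{\beta x}{2}$ and $\frac12\coth\frac{\beta x}{2}$ generate genuine tempered distributions (the $\coth$ term through its simple $\mathrm{Vp}$-pole at the origin), and I expect their transforms to reproduce exactly $T_g$ and $T_h$.

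Next I would compute $\mathscr{F}\tanh\frac{\beta x}{2}$ by differentiation. Since $\frac{\mathrm{d}}{\mathrm{d}x}\tanh\frac{\beta x}{2}=\frac{\beta}{2}\operatorname{sech}^{2}\frac{\beta x}{2}$ decays exponentially, its Fourier transform is classical and, by a rectangular-contour residue computation, equals $\frac{2\pi k}{\beta\sinh(\pi k/\beta)}$. Applying the rule $\mathscr{F}\mathscr{D}T=(ik)\,\mathscr{F}T$ of Eq.~\ref{insert_equation} and dividing by $ik$ then gives $\mathscr{F}\tanh\frac{\beta x}{2}=-\frac{2\pi i}{\beta}\,\mathrm{Pf}\,\frac{1}{\sinh(\pi k/\beta)}$; the $\delta$-ambiguity inherent in dividing a distribution by $k$ is removed by parity, since $\tanh$ is odd and its transform must therefore be odd. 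Multiplying by $-\frac12$ and adjoining the constant term already yields the Fermi case $\mathscr{F}T_{f_+}=\pi\delta+T_g$ with $g$ as claimed.

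For the Bose case I would avoid the non-integrable derivative $\operatorname{csch}^{2}$ and instead use $\coth u=\tanh u+\frac{2}{\sinh 2u}$, i.e. $\coth\frac{\beta x}{2}=\tanh\frac{\beta x}{2}+\frac{2}{\sinh\beta x}$. The first summand is already known, and $\mathscr{F}\,\mathrm{Vp}\,\frac{1}{\sinh\beta x}=-\frac{i\pi}{\beta}\tanh\frac{\pi k}{2\beta}$ is a standard $\mathrm{Vp}$-integral evaluated by the same contour technique. Combining the two and simplifying with the hyperbolic identity $\frac{1}{\sinh 2u}+\tanh u=\coth 2u$ collapses the sum to $-\frac{2\pi i}{\beta}\coth\frac{\pi k}{\beta}$; after the factor $\frac12$ this is exactly $h$, so with the constant term one obtains $\mathscr{F}T_{f_-}=-\pi\delta+T_h$.

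The main obstacle is the honest evaluation of the two building-block hyperbolic transforms together with the control of the regularizations. Both $\mathscr{F}\operatorname{sech}^{2}$ and $\mathscr{F}\,\mathrm{Vp}\,\operatorname{csch}$ require integrating over a rectangle of height $2\pi i/\beta$ (the period of $\mathrm{e}^{\beta z}$) and summing the residues at the poles $z=\frac{i\pi}{\beta}(2n+1)$ resp.\ $z=\frac{2\pi i n}{\beta}$; for $\operatorname{csch}$ the poles lie on the contour, so one must indent around them and retain the half-residues, which is precisely what reproduces the $\mathrm{Vp}$. Equally delicate is the bookkeeping of the non-decaying tail at $x\to-\infty$: a direct contour attack on $\int\frac{\mathrm{e}^{-ikx}}{\mathrm{e}^{\beta x}\pm1}\mathrm{d}x$ fails to close, because the left vertical edge does not vanish, and it is exactly this tail that materializes as the $\pm\pi\delta$ term. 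The decomposition above is designed to quarantine that divergence inside the constant summand, so that the contour arguments are only ever applied to genuinely decaying (or simple-pole) integrands.
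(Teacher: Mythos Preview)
Your proposal is correct and takes a genuinely different route from the paper. The paper does not split off the constant tail algebraically; instead it inserts an $\mathrm{e}^{-\epsilon|x|}$ regulator, applies Fubini, expands $\frac{1}{1\pm\mathrm{e}^{-\beta x}}$ into a geometric series, and integrates term by term to obtain
\[
\pm\frac{1}{\epsilon-ik}\;+\;\sum_{n\geq 1}(\mp1)^{n}\,\frac{2ik}{\beta^{2}n^{2}+k^{2}}.
\]
The first term produces $\pm\pi\delta\pm i\,\mathrm{Vp}\frac{1}{k}$ via Sokhotski--Plemelj, and the remaining series is summed by quoting the Gradshteyn--Ryzhik partial-fractions formula $\sum_{n}\frac{(-1)^{n}\cos nx}{\alpha^{2}-n^{2}}$ at $\alpha=ik/\beta$, $x=0$ or $x=\pi$; the stray $\mathrm{Vp}\frac{1}{k}$ then cancels against the $n=0$ term of that summation. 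Your approach trades this series identity for two concrete contour integrals ($\mathscr{F}\operatorname{sech}^{2}$ and $\mathscr{F}\,\mathrm{Vp}\operatorname{csch}$) plus the hyperbolic addition formula, and makes the origin of $\pm\pi\delta$ transparent as the transform of the constant asymptote $\pm\tfrac12$. The paper's method is more uniform (one computation covers both signs) but relies on a tabulated series; yours is self-contained and explains structurally why the $\delta$ appears, at the cost of the extra bookkeeping you flag around the $\tfrac{1}{ik}$ division and the indented contour for $\operatorname{csch}$.
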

\begin{proof}
\begin{equation}
\begin{split}
\label{evaluation}
\mathrm{Pf}\int\limits_{\mathbb{R}}\frac{\hat{\varphi}(x)}{\mathrm{e}^{\beta x}\pm1}\,\mathrm{d}x&:=\mathrm{Pf}\int\limits_{\mathbb{R}}\frac{\hat{\varphi}(x)}{\mathrm{e}^{\beta x}\pm1}\,\mathrm{e}^{-\epsilon|x|}\,\mathrm{d}x\\
&=\mathrm{Pf}\int\limits_{\mathbb{R}}\mathrm{d}x\,\frac{\hat{\varphi}(x)}{\mathrm{e}^{\beta x}\pm1}\,\mathrm{e}^{-\epsilon|x|}\,\int\limits_{\mathbb{R}}\mathrm{d}k\,\varphi(k)\,\mathrm{e}^{-ikx}
\end{split}
\end{equation}
Applying Fubini's theorem gives:
\begin{equation}
\label{evaluation2}
\mathscr{F}T_{f_{\pm}}[\varphi]:=\mathrm{f.\,p.\,of}\lim_{\epsilon\to0^{+}}\int\limits_{\mathbb{R}}\mathrm{d}k\,\varphi(k)\,\int\limits_{|x|>\epsilon}\mathrm{d}x\,\frac{\mathrm{e}^{-\epsilon|x|-ikx}}{\mathrm{e}^{\beta x}\pm1}
\end{equation}
The integral on the right-hand side of Eq.~\ref{evaluation2} can be rewritten as
\begin{equation*}
\pm\int^{\infty}_{\epsilon}\mathrm{d}x\frac{\mathrm{e}^{ikx-\epsilon x}}{1\pm\mathrm{e}^{-\beta x}}+\int^{\infty}_{\epsilon}\mathrm{d}x\frac{\mathrm{e}^{-ikx-(\beta+\epsilon)x}}{1\pm \mathrm{e}^{-\beta x}}
\end{equation*}
By expanding both denominators into a geometric series:
\begin{equation*}
\frac{1}{1\pm \mathrm{e}^{-\beta x}}=\sum^{\infty}_{n=0}(\mp1)^{n}\mathrm{e}^{-\beta nx}
\end{equation*}
one gets (after an interchange of the integral and the sum):
\begin{equation}
\label{sum}
\pm\int^{\infty}_{\epsilon}\mathrm{d}x\,\mathrm{e}^{ikx-\epsilon x}+ \sum^{\infty}_{n=1}(\mp1)^{n}\int^{\infty}_{\epsilon}\mathrm{d}x\left[\pm\mathrm{e}^{-x(\beta n+\epsilon-ik)}\mp\mathrm{e}^{-x\left(\beta n+\epsilon+ik\right)}\right]
\end{equation}
Evaluating both integrals yields:
\begin{equation}
\label{expression}
\eqref{sum}=\pm\frac{1}{\epsilon-ik}+\sum^{\infty}_{n=1}(\mp1)^{n}\left[\pm\frac{\mathrm{e}^{-\epsilon(\beta n+\epsilon-ik)}}{\beta n+\epsilon-ik}\mp\frac{\mathrm{e}^{-\epsilon(\beta n+\epsilon+ik)}}{\beta n+\epsilon+ik}\right]
\end{equation}
Plugging Eq.~\ref{expression} into Eq.~\ref{evaluation2} gives:
\begin{equation}
\label{result}
\begin{split}
\mathscr{F}T_{f_{\pm}}[\varphi]=&\pm\lim_{\epsilon\to0^{+}}\int\limits_{\mathbb{R}}\mathrm{d}k\,\frac{\varphi(k)}{\epsilon-ik}\\
&\pm\int\limits_{\mathbb{R}}\mathrm{d}k\,\varphi(k)\sum^{\infty}_{n=1}(\mp1)^{n}\frac{2ik}{\beta^2n^2+k^2}
\end{split}
\end{equation}
The limit in Eq.~\ref{result} is given by formula \ref{Sokhotski_Plemelj} (Sokhotski-Plemelj):
\begin{equation}
\label{sokhotski_2}
\pm\lim_{\epsilon\to0^{+}}\int\limits_{\mathbb{R}}\mathrm{d}x\,\frac{\varphi(x)}{\epsilon-ix}=
\pm\pi\delta[\varphi]\pm i\,\mathrm{Vp}\frac{1}{x}
\end{equation}
whereas the sum in Eq.~\ref{result} is determined by the following formula \cite{gradshteyn:2014}:
\begin{equation*}
\sum^{\infty}_{n=1}\frac{(-1)^n\cos{(nx)}}{\alpha^{2}-n^2}=\left(\frac{\pi\cos{\alpha x}}{\sin{\alpha\pi}}-\frac{1}{\alpha}\right)\frac{1}{2\alpha},\quad \alpha \in \mathbb{C/Z}
\end{equation*}
at $\alpha=\frac{i k}{\beta}$ and $x=0\, (+), x=n\pi\, (-)$, respectively. This gives then:
\begin{equation}
\label{sum2}
2ik\sum^{\infty}_{n=1}\frac{(\mp1)^{n}}{\beta^2n^2+k^2}=\begin{cases}
\frac{1}{ik}+\frac{i\pi}{\beta\sinh{\frac{k\pi}{\beta}}}&(+)\\
\frac{1}{ik}+\frac{i\pi}{\beta}\coth{\frac{k\pi}{\beta}}&(-)\\
\end{cases}
\end{equation}
Eqs.~\ref{sum2}, \ref{sokhotski_2} complete the proof.
\end{proof}
Theorem \ref{Theorem_quantum_statistics} enables us to evaluate the Fourier transform of Heaviside's unit step function $H$ (intended as a distribution $T_H$) in an alternative way.
The following limit is easy to check:
\begin{equation*}
\mathscr{S}^{\prime}-\lim_{\beta\to \infty}T_{f_{\pm}}[\varphi]=
\pm T_{H(-x)}
\end{equation*}
The operator $\mathscr{F}$ is continous in $\mathscr{S}^{\prime}\left(\mathbb{R}\right)$. One gets therefore:
\begin{equation}
\label{formule_1}
\mathscr{S}^{\prime}-\lim_{\beta\to \infty}\mathscr{F}T_{f_{\pm}}[\varphi]=\mathscr{F}\left[\mathscr{S}^{\prime}-\lim_{\beta\to \infty}T_{f_{\pm}}[\varphi]\right]=\pm\mathscr{F}T_{H(-x)}
\end{equation}
On the other hand, the limit $\beta \rightarrow \infty$ of Eq.~\ref{formula_quantum_stat} is given by the expression:
\begin{equation}
\label{formule_2}
\lim_{\beta\to \infty}T_{f_{\pm}}[\varphi]=\pm\pi\delta[\varphi]\pm i\,\mathrm{Vp}\frac{1}{x}
\end{equation} 
Comparison of Eqs.~\ref{formule_1}, \ref{formule_2} leads to the well-known expression:
\begin{equation*}
\mathscr{F}T_{H}[\varphi]=\pi\delta[\varphi]-i\,\mathrm{Vp}\frac{1}{x}[\varphi]
\end{equation*}
\section{Fractional derivatives of tempered distributions}
\label{sec_frac_deriv}
Tempered distributions $T\in \mathscr{S^{\prime}(\mathbb{R})}$ can be differentiated arbitrarily many times. Equally important is the fact that they have a well-defined Fourier transform.
These advantageous features can be used to define fractional derivatives of tempered distributions. 
In order to achieve such a goal we have to extend slightly the definition of multiplying a function with a distribution. Schwartz showed the impossibility of multiplying two distributions in the usual sense \cite{schwartz:1951}. As known, multipliying a tempered distribution with a smooth function $f(x) \in C^{\infty}(\mathbb{R})$ is possible.  For our purpose we need a definition where the function $f(x)$ is allowed to be non-differentiable at some specific points $x\in \mathbb{R}$.\\
More specifically, we want to multiply the monomial $x^{\alpha}$ ($\alpha \in \mathbb{R}$) with a tempered distribution $T[\varphi]$. In the following we introduce a few definitions we need to define fractional derivatives.
\begin{dfn}
\label{multiplications_distribution}
Let $T[\varphi] \in \mathscr{S}^{\prime}(\mathbb{R})$. For a real exponent $\alpha$ we define formally:
\\
\begin{equation}
\label{first_def}
\mathscr{F}\mathscr{D}^{\alpha}T[\varphi]:=i^{\alpha}x^{\alpha}\mathscr{F}T[\varphi]
\end{equation}
\end{dfn}
Where the monomial $x^{\alpha}$ must be interpreted as a \emph{function}.
However, the right-hand side of Eq.~\ref{first_def} must be given a proper meaning since in general the function $x^{\alpha}$ lacks smoothness. In order to achieve a suitable definition one has to distinguish according to whether $T[\varphi]$ is \emph{regular} or \emph{singular}. 
\begin{dfn}
\label{firstdefinition}
Let $T_f[\varphi]$ be a \emph{regular} distribution generated by $f(x)$. For $\alpha \in \mathbb{R}$ we define then:
\begin{equation*}
x^{\alpha}T_{f}[\varphi]:=T_{x^{\alpha}f}[\varphi]\in \mathscr{S}^{\prime}(\mathbb{R})
\end{equation*}
where:
\begin{equation*}
T_{x^{\alpha}f}[\varphi]:=\mathrm{Pf}\int\limits_{\mathbb{R}}x^{\alpha}f(x)\varphi(x)\mathrm{d}x
\end{equation*}
\end{dfn}
\begin{dfn}
\label{seconddefinition}
Let $T[\varphi]:=\mathscr{D}^{n}\delta[\varphi]$ be \emph{singular} then:
\begin{equation}
\label{muliplication_sing_distribution}
x^{\alpha}\mathscr{D}^{n}\delta[\varphi]:=\begin{cases}
0&\text{if $\alpha>n$}\\
(-1)^{n}\,n!\,\delta[\varphi]&\text{if $\alpha=n$}\\
(-1)^{\alpha}\frac{i^{n-\alpha}\Gamma(n+1)}{\Gamma(n-\alpha+1)}\mathscr{F}^{-1}\left[x^{n-\alpha}T_{1}\right]&\text{if $\alpha<n$}
\end{cases}
\end{equation}
\end{dfn}
The case $\alpha<n$ in Eq.~\ref{muliplication_sing_distribution} is justified by the following considerations. One starts with the identity:
\begin{equation}
\label{derivative_delta_dirac}
\mathscr{D}^{n}\delta[\varphi]=
(-1)^{n}n!\frac{\delta[\varphi]}{x^{n}}
\end{equation}
Multiplication with $x^{\alpha}$ gives:
\begin{equation}
\label{frac_delta}
x^{\alpha}\mathscr{D}^{n}\delta[\varphi]=
(-1)^{n}n!\frac{\delta[\varphi]}{x^{n-\alpha}}
\end{equation}
At this stage we introduce a somewhat heuristic argument. By presuming that the classical formula Eq.~\ref{derivative_delta_dirac}
holds also for fractional powers of the operator $\mathscr{D}$ one may write formally:
\begin{equation*}
\mathscr{D}^{n-\alpha}\,\delta\left[\varphi\right]=(-1)^{n-\alpha}\,\Gamma(n-\alpha+1)\frac{\delta[\varphi]}{x^{n-\alpha}}
\end{equation*}
Then we proceed by rewriting the right-hand side of Eq.~\ref{frac_delta} in the form:
\begin{equation}
\label{new_expression}
x^{\alpha}\mathscr{D}^{n}\delta[\varphi]=(-1)^{\alpha}\frac{\Gamma(n+1)}{\Gamma(n-\alpha+1)}\mathscr{D}^{n-\alpha}\,\delta\left[\varphi\right]
\end{equation}
By applying the identity $\mathds{1}=\mathscr{F}^{-1}\mathscr{F}$ to the right-hand side of Eq.~\ref{new_expression} and using $\mathscr{F}\delta[\varphi]=T_{1}[\varphi]$ one obtains:
\begin{equation*}
x^{\alpha}\mathscr{D}^{n}\delta[\varphi]=(-1)^{\alpha}i^{n-\alpha}\frac{\Gamma(n+1)}{\Gamma(n-\alpha+1)}\mathscr{F}^{-1}\left[x^{n-\alpha}T_{1}\right]\in \mathscr{S}^{\prime}(\mathbb{R})
\end{equation*}
Defs.~\ref{firstdefinition}, \ref{seconddefinition} suggest:
\begin{equation*}
x^{\alpha}\,T[\varphi]\in \mathscr{S}^{\prime}(\mathbb{R}),\quad \mathrm{for\, every}\,\, T[\varphi]\in \mathscr{S}^{\prime}(\mathbb{R})
\end{equation*}
\\
These preparatives enable us to define the operator $\mathscr{D}^{\alpha}$, as shown in the following.
\\
\begin{dfn}
According to the following diagram:
\\
\begin{center}
$\begin{CD}
T@>\mathscr{D}^{\alpha}>{\phantom{\text{very long label}}}>\mathscr{D}^{\alpha}T  \\
@V{\mathscr{F}}VV@AA{\mathscr{F}^{-1}}A  \\
\hat{T} @>\cdot(ix)^{\alpha}>{\phantom{\text{very long label}}}> (ix)^{\alpha}\hat{T}
\end{CD}$
\end{center}
we introduce for every real value $\alpha \in \mathbb{R}$ a linear operator $\mathscr{D}^{\alpha}$:
\begin{equation} 
\label{operator_fractional_derivative}
\mathscr{D}^{\alpha}\colon\mathscr{S^{\prime}(\mathbb{R})}\to \mathscr{S^{\prime}(\mathbb{R})},\quad \alpha \in \mathbb{R}.
\end{equation}
defined by:
\begin{equation} 
\label{the_definition}
\mathscr{D}^{\alpha}T[\varphi]:=\mathscr{F}^{-1}(ix)^{\alpha}\mathscr{F}T[\varphi]
\end{equation}
The operator $\mathscr{D}^{\alpha}$ is called the operator of fractional derivation of real order $\alpha$.
\subsection{algebraic properties}
It is immediately evident that within $\{\mathscr{D}^{\alpha}\}_{\alpha\geq 0}$ and $\{\mathscr{D}^{\alpha}\}_{\alpha\leq 0}$ the following is valid:
\begin{equation} 
\label{group_property}
\begin{split}
\mathscr{D}^{\alpha+\beta}T[\varphi]:&=\mathscr{F}^{-1}(ix)^{\alpha+\beta}\mathscr{F}T
=\mathscr{F}^{-1}(ix)^{\alpha}(ix)^{\beta}\mathscr{F}T\\
&=\mathscr{F}^{-1}(ix)^{\alpha}\mathscr{F}\mathscr{F}^{-1}(ix)^{\beta}\mathscr{F}T
=\mathscr{D}^{\alpha}\mathscr{D}^{\beta}T[\varphi].
\end{split}
\end{equation}
Which is the fundamental property of an abelian semigroup. The (unique) identity operator is given by $\mathds{1}:=\mathscr{D}^{0}$.\\
\begin{exm}
\begin{equation*}
\mathscr{D}^{\frac{1}{2}}\mathscr{D}^{\frac{1}{2}}x^{-\frac{1}{2}}=\mathscr{D}\,x^{-\frac{1}{2}}=-\frac{1}{2}\,x^{-\frac{3}{2}}
\end{equation*}
In a first step, we calculate the fractional derivative $\mathscr{D}^{\frac{1}{2}}\,x^{-\frac{1}{2}}$ which by definition is:
\begin{equation*}
\mathscr{F}^{-1}\sqrt{ix}\,\mathscr{F}x^{-\frac{1}{2}}
\end{equation*}
The Fourier transform of $x^{-\frac{1}{2}}$ can be looked up in Tab.~\ref{tabelle}. We obtain then:
\begin{equation*}
\mathscr{D}^{\frac{1}{2}}x^{-\frac{1}{2}}=\mathscr{F}^{-1}\sqrt{ix}\,\left[2\sqrt{\pi}\,H(-x)(-i|x|)^{-\frac{1}{2}}\right]=2\sqrt{\pi}\,\mathscr{F}^{-1}H(-x)
\end{equation*}
Since:
\begin{equation*}
\mathscr{F}^{-1}H(-x)=\frac{\delta[\varphi]}{2}-\frac{i}{2\pi x}
\end{equation*}
we get:
\begin{equation*}
\mathscr{D}^{\frac{1}{2}}x^{-\frac{1}{2}}=2\sqrt{\pi}\,\delta[\varphi]-\frac{i}{\sqrt{\pi}x}
\end{equation*}
Applying again a "half-derivative" $\mathscr{D}^{\frac{1}{2}}$ gives the claimed result:
\begin{equation}
\label{last_step}
2\mathscr{F}^{-1}\,\sqrt{-i\pi|x|}\,H(-x)=-\frac{1}{2}\,x^{-\frac{3}{2}}
\end{equation}
Where again, Tab.~\ref{tabelle} has been consulted for Eq.~\ref{last_step}.\\
\end{exm}
When one considers the "full" set $\{\mathscr{D}^{\alpha}\}_{\alpha \in \mathbb{R}}$ the commutative property is lost in general. The inverse operator exists but in general the order of multiplication matters as shown by the following simple example.
\begin{exm}
\begin{equation}
\label{non-commutative}
\mathscr{D}\mathscr{D}^{-1}H\neq \mathscr{D}^{-1}\mathscr{D}H
\end{equation}
The left-hand side of Eq.~\ref{non-commutative} is given by:
\begin{equation}
\begin{split}
\mathscr{D}\mathscr{D}^{-1}H:&=\mathscr{D}\left[\mathscr{F}^{-1}\frac{1}{ix}(\pi\,\delta-\frac{i}{x})\right]
=\mathscr{D}\left[\mathscr{F}^{-1}\left(i\pi\delta^{\prime}-\frac{1}{x^2}\right)\right]\\
&=\frac{1}{2}\mathscr{D}\left[x+|x|\right]=\mathscr{D}[xH]=1\cdot H+x\cdot\delta=H
\end{split}
\end{equation}
On the other hand, we obtain for the right-hand side of Eq.~\ref{non-commutative}:
\begin{equation}
\begin{split}
\mathscr{D}^{-1}\mathscr{D}H&=\mathscr{D}^{-1}\delta:=\mathscr{F}^{-1}\frac{1}{ix}\mathscr{F}\delta\\
&=\mathscr{F}^{-1}\mathrm{Vp}\,\frac{1}{ix}=\frac{1}{2}\mathrm{sgn}
\end{split}
\end{equation}
\end{exm}
We note that the commutative property (and with it the group property) still holds for the subspace of distributions in $\mathcal{S^{\prime}(\mathbb{R})}$ whose Fourier transforms are not singular:
\begin{equation*}
\mathscr{D}^{\alpha}\mathscr{D}^{\beta}T=\mathscr{D}^{\beta}\mathscr{D}^{\alpha}T=\mathscr{D}^{\alpha+\beta}T\quad \text{if}\,\, \mathscr{F}T\,\,\text{is}\,\,\text{not}\,\,\text{singular.} 
\end{equation*}
\\
The following examples are verified easily by using the results of Sec.~\ref{rational_functions}:
\begin{exm}
\begin{equation*}
\begin{split}
\mathscr{D}\mathscr{D}^{-1}\mathrm{Pf}\,\frac{1}{x}&=\mathscr{D}^{-1}\mathscr{D}\,\mathrm{Pf}\,\frac{1}{x}\\
\mathscr{D}^{-2}\mathscr{D}\,\frac{1}{x}&=\mathscr{D}\mathscr{D}^{-2}\,\frac{1}{x}
\end{split}
\end{equation*}
\end{exm}
\begin{rem}
The reason behind the non-commutativity within $\{\mathscr{D}^{\alpha}\}_{\alpha \in \mathbb{R}}$ is the occurrence of the product 
$(ix)^{\alpha}\,(ix)^{\beta}\,\mathscr{F}T$ in Eq.~\ref{group_property} which is not associative in general. Associativity holds only in the case when the exponents $\alpha$ and $\beta$ are of the same sign. An example of this fact is due to L. Schwartz \cite{schwartz:1951}, given as follows:
\begin{exm}
\begin{equation*}
\left(\frac{1}{x}\cdot x\right)\delta=1\cdot\delta=\delta\neq \frac{1}{x}(x\cdot \delta)=\frac{1}{x}\cdot 0=0
\end{equation*}
The identity $\frac{1}{x}\cdot x=1$ is not so trivial as it appears. A rigorous proof can be found in \cite{mikusinski:1973}.
\end{exm}
\end{rem} 
\end{dfn}
\subsection{Examples}
\label{examples_FD}
This section is devoted to the analysis of some typical examples. According to Def.~\ref{the_definition} we apply always the following calculational scheme:
\begin{equation*}
\mathscr{D}^{\alpha}:T\longrightarrow\mathscr{F}T\longrightarrow (ix)^{\alpha}\mathscr{F}T\longrightarrow \mathscr{F}^{-1}(ix)^{\alpha}\mathscr{F}T
\end{equation*}
\subsubsection{Periodic functions}
We want to evaluate the fractional derivative of the periodic distribution $\mathrm{e}^{iax}$. 
For notational brevity, we write $\mathrm{e}^{iax}$ instead of $T_{\mathrm{e}^{iax}}[\varphi]$.\\ 
In a first step we get:
\begin{equation}
\label{equation}
\mathscr{F}\mathscr{D}^{\alpha}\mathrm{e}^{iax}=2\pi\,i^{\alpha}x^{\alpha}\,\delta_{a}[\varphi]
=2\pi\,i^{\alpha}a^{\alpha}\,\delta_{a}[\varphi]
\end{equation}
where $\delta_a[\varphi]$ stands for the translated delta distribution $\delta_{a}[\varphi]:=\varphi(a)$.\\
Fourier inversion of Eq.~\ref{equation} yields:  
\begin{equation}
\label{half_derivative}
\mathscr{D}^{\alpha}\mathrm{e}^{iax}=\mathscr{F}^{-1}\left[i^{\alpha}x^{\alpha}2\pi\,\delta_{a}\right]=i^{\alpha}a^{\alpha}\mathrm{e}^{iax}
\end{equation}
In particular, for $\alpha=\frac{1}{2}$:
\begin{equation*}
\mathscr{D}^{\frac{1}{2}}\mathrm{e}^{iax}=\sqrt{\frac{a}{2}}(1+i)(\cos{ax}+i\sin{ax})
\end{equation*}
due to linearity of the operator $\mathscr{D}^{\frac{1}{2}}$ we obtain also:  
\begin{equation*}
\begin{split}
\mathscr{D}^{\frac{1}{2}}\cos{ax}&=\sqrt{\frac{a}{2}}(\cos{ax}-\sin{ax})\\
\mathscr{D}^{\frac{1}{2}}\sin{ax}&=\sqrt{\frac{a}{2}}(\cos{ax}+\sin{ax})
\end{split}
\end{equation*}
Thus, one gets: 
\begin{equation}
\label{half_deriv_sin_cos}
\begin{split}
&\mathscr{D}^{\frac{1}{2}}\mathscr{D}^{\frac{1}{2}}\cos{ax}=-\sin{ax}=\mathscr{D}\cos{ax}\\
&\mathscr{D}^{\frac{1}{2}}\mathscr{D}^{\frac{1}{2}}\sin{ax}=+\cos{ax}=\mathscr{D}\sin{ax}\\
\end{split}
\end{equation}
Eq.~\ref{half_derivative} offers the possibility to define fractional derivatives of Fourier series.\\
\subsubsection{Exponential $\mathrm{e}^{x}$}
Classical analysis shows that the exponential function $f(x)=\mathrm{e}^{x}$ is the only non-trivial
function whose derivative is equal to itself. It is interesting to see what happens when we evaluate the fractional derivative of the exponential. By the usual procedure we obtain:
\begin{equation*} 
\mathscr{D}^{\alpha}\mathrm{e}^{x}=\mathscr{F}^{-1}(ix)^{\alpha}\mathscr{F}\mathrm{e}^{x}
\end{equation*}
According to \cite{gelfand:1964} we have:
\begin{equation}
\label{caution}
\mathscr{F}\mathrm{e}^{x}=2\pi\,\delta_{i}[\varphi]
\end{equation}\\
Where $\delta_{i}[\varphi]$ denotes the translated ($i=$imaginary unit) delta distribution $\delta_{i}[\varphi]:=\varphi(i)$\\
Formula \ref{caution} has to be taken with some caution. The exponential $\mathrm{e}^{x}$ is not a tempered distribution. Despite this fact, the authors in \cite{gelfand:1964} derive its Fourier transform by transforming term-bye-term the power series of $\mathrm{e}^{x}$.\\
\\
Assuming the validity of Eq.~\ref{caution} we obtain:
\begin{equation*} 
\mathscr{D}^{\alpha}\mathrm{e}^{x}=2\pi\,\mathscr{F}^{-1}(ix)^{\alpha}\,\delta_{i}[\varphi]
\end{equation*}\\
By virtue of the general equation $g(x)\delta(x-a)=g(a)\delta(x-a)$ we obtain formally:\\
\begin{equation*}
\mathscr{D}^{\alpha}\mathrm{e}^{x}=2\pi\,\mathscr{F}^{-1}\delta_{i}[\varphi]
\end{equation*}
By the translation property:
\begin{equation*}
 \mathscr{F}^{-1}\delta_{i}[\varphi]=\mathrm{e}^{ix(-i)}\mathscr{F}^{-1}\delta[\varphi]
\end{equation*}
we get the remarkable formula:
\begin{equation*}
\mathscr{D}^{\alpha}\mathrm{e}^{x}=\mathrm{e}^{x}\quad\text{for \underline{every} real }\alpha \in \mathbb{R}
\end{equation*}
\subsubsection{Dirac Distribution and Heaviside unit step function}
First, we evaluate the fractional derivative of Heaviside's step function $H(x)$:
\begin{equation*}
\begin{split}
&\mathscr{F}\mathscr{D}^{\frac{1}{2}}T_H=\sqrt{ix}\,\mathscr{F}T_H=\sqrt{ix}\,\left[\pi\delta[\varphi]-i\,\mathrm{Vp}\frac{1}{x}\right]=\mathrm{Pf}\frac{1}{\sqrt{ix}}
\end{split}
\end{equation*}
According to Eq.~\ref{formula_I} one obtains:
\begin{equation}
\label{half_Heaviside}
\mathscr{D}^{\frac{1}{2}}T_H\equiv\mathscr{F}^{-1}\mathrm{Pf}\frac{1}{\sqrt{ix}}=\mathrm{Pf}\,\frac{H(x)}{\sqrt{\pi x}}\\
\end{equation}
It is interesting to note that Eq.~\ref{half_Heaviside} coincides with a corresponding formula given by Heaviside in \cite{heaviside:1970}(See also \cite{courant:1943}) where he used this formula in the context of operational calculus. 
However, this formula was actually known long before Heaviside \cite{ross:1977}.\\
In the same manner we obtain:
\begin{equation*}
\begin{split}
\mathscr{F}\mathscr{D}^{\frac{1}{2}}\delta[\varphi]=\sqrt{ix}\,\mathscr{F}\delta[\varphi]=\mathrm{Pf}\sqrt{ix}
\end{split}
\end{equation*}
Thus, by Eq.~\ref{formula_integer}: 
\begin{equation}
\label{half_delta}
\mathscr{D}^{\frac{1}{2}}\delta\equiv\mathscr{F}^{-1}\mathrm{Pf}\sqrt{ix}=-\mathrm{Pf}\,\frac{H(x)}{2\sqrt{\pi}\,|x|^{\frac{3}{2}}}
\end{equation}
Alternatively, the same result can be achieved in the following way:
\begin{equation*}
\mathscr{D}^{\frac{1}{2}}\,\delta[\varphi]=\mathscr{D}^{\frac{1}{2}}\mathscr{D}\,T_{H}[\varphi]
=\mathscr{D}\mathscr{D}^{\frac{1}{2}}\,T_{H}[\varphi]
\end{equation*}
By using the result Eq.~\ref{half_Heaviside} one gets Eq.~\ref{half_delta}, as expected.
Moreover, a short calculation yields:
\begin{equation*}
\begin{split}
\mathscr{D}^{\frac{1}{2}}\mathscr{D}^{\frac{1}{2}}\,\delta[\varphi]&=\mathscr{D}\,\delta[\varphi]\\
\mathscr{D}^{\frac{1}{2}}\mathscr{D}^{\frac{1}{2}}\,T_{H}[\varphi]&=\mathscr{D}\,T_{H}[\varphi]
\end{split}
\end{equation*}
\subsubsection{Polynomial functions}
Let $T_{f}\left[\varphi\right]$ be the regular distribution which is generated by the polynomially growing function $f(x)=x^{n}$ where $n$ is an arbitray positive integer $n\in \mathbb{N}$.\\
Then: 
\begin{rem}
\begin{equation*}
\mathscr{D}^{\alpha}T_{f}[\varphi]=\begin{cases}
0& \mathrm{if} \alpha>n\\
\frac{\Gamma(n+1)}{\Gamma(n-\alpha+1)}x^{n-\alpha}\,T_{\bold{1}}[\varphi]&\mathrm{if} \alpha\leq n\\
\end{cases}
\end{equation*}
\end{rem}
\begin{proof}
We start by a Fourier transform:
\begin{equation}
\label{dirac_frac}
\begin{split}
\mathscr{F}\mathscr{D}^{\alpha}T_{f}&=i^{\alpha}x^{\alpha}\mathscr{F}T_f[\varphi]=2\pi x^{\alpha} i^{n+\alpha}\mathscr{D}^{n}\delta[\varphi]\\
&=2\pi\,i^{n+\alpha}(-1)^{n}n!\frac{\delta[\varphi]}{x^{n-\alpha}}
\end{split}
\end{equation}
The latter expression does vanish for every real $\alpha$ larger than $n$. This in turn means that $\mathscr{D}^{\alpha}T_{f}=0$. Therefore, one is left with the case $\alpha\leq n$:\\
By presuming that the formula: 
\begin{equation*}
\mathscr{D}^{n}\,\delta\left[\varphi\right]=(-1)^{n}\,n!\,\frac{\delta[\varphi]}{x^{n}},\quad n\in\mathbb{N}
\end{equation*}
holds also for fractional powers of the operator $\mathscr{D}$ one may write:
\begin{equation*}
\mathscr{D}^{n-\alpha}\,\delta\left[\varphi\right]=(-1)^{n-\alpha}\,\Gamma(n-\alpha+1)\frac{\delta[\varphi]}{x^{n-\alpha}}
\end{equation*}
Thus, the right-hand side of Eq.~\ref{dirac_frac} may be rewritten in the form:
\begin{equation}
\label{introduction_frac_delta}
\mathscr{F}\mathscr{D}^{\alpha}T_{f}=2\pi\,i^{n+\alpha}(-1)^{\alpha}\frac{\Gamma(n+1)}{\Gamma(n-\alpha+1)}\mathscr{D}^{n-\alpha}\,\delta\left[\varphi\right]
\end{equation}
By introducing the following identity into Eq.~\ref{introduction_frac_delta}:
\begin{equation*}
\mathscr{F}^{-1}\left[\mathscr{D}^{n-\alpha}\,\delta\right]=\frac{1}{2\pi}\,(-ix)^{n-\alpha}\,T_{\bold{1}}[\varphi]
\end{equation*}
we obtain a general formula valid for every real number $\alpha$:
\begin{equation}
\label{prefac}
\mathscr{D}^{\alpha}T_{f}[\varphi]=\underbrace{i^{n+\alpha}(-i)^{n-\alpha}(-1)^{\alpha}}_{\bigstar}\begin{cases}
0& \mathrm{if} \alpha>n\\
\frac{\Gamma(n+1)}{\Gamma(n-\alpha+1)}x^{n-\alpha}\,T_{\bold{1}}[\varphi]&\mathrm{if} \alpha\leq n\\
\end{cases}
\end{equation}
The prefactor reduces to unity:
\begin{equation}
\label{prefac=1}
\bigstar=\mathrm{e}^{(n+\alpha)i\frac{\pi}{2}}\,\mathrm{e}^{-i(n-\alpha)\frac{\pi}{2}}\,\mathrm{e}^{-i\pi\alpha}=\mathrm{e}^{0}=1,\quad \forall n\in \mathbb{N},\,\alpha \in \mathbb{R}
\end{equation}
which is not the case in every branch of the complex logarithm.
\end{proof}
In the particular case of a vanishing exponent $n=0$ one obtains:
\begin{equation*}
\frac{\mathrm{d}^{\alpha}}{\mathrm{d}x^{\alpha}}1=\begin{cases}
0& \mathrm{if} \alpha>0\\
\frac{1}{\Gamma(1-\alpha)}x^{-\alpha}&\mathrm{if} \alpha\leq 0\\
\end{cases}
\end{equation*}
We note also that for every real $\alpha$ smaller or equal than any (fixed) integer $n$ formula \ref{prefac} is analogous to the classical formula:
\begin{equation}
\label{classical_formula}
\frac{\mathrm{d}^{\alpha}}{\mathrm{d}x^{\alpha}}\,x^{n}=\frac{n!}{(n-\alpha)!}\,x^{n-\alpha}=\frac{\Gamma(n+1)}{\Gamma(n-\alpha+1)}\,x^{n-\alpha}
\end{equation}
However, in contrast to Eq.~\ref{prefac} the classical formula \ref{classical_formula} remains undefined for every integer $\alpha$ larger than $n$.
Moreover, it does not vanish in cases such as:
\begin{equation*}
\mathrm{d}^{\frac{3}{2}}/\mathrm{d}x^{\frac{3}{2}}x\neq 0
\end{equation*}
which is an undesired feature.\\
\begin{exm}
According to Eq.~\ref{prefac} one obtains the formula:
\begin{equation}
\label{hospital_example}
\mathscr{D}^{\frac{1}{2}}T_{x}[\varphi]=\frac{\Gamma(2)}{\Gamma(\frac{3}{2})}\sqrt{x}\,T_{\bold{1}}[\varphi]=\frac{2}{\sqrt{\pi}}T_{\sqrt{x}}\,[\varphi]
\end{equation}
This example is mainly interesting from a historical perspective. The issue of fractional derivatives has been raised in a series of correspondences between Guillaume de L' H\^ospital and Leibniz at the end of the 17th century. In one of these exchanges \cite{leibniz:1695} Leibniz raises the question what $\mathrm{d}^{\frac{1}{2}}/\mathrm{d}x^{\frac{1}{2}}x$ is. Formula \ref{hospital_example} is an adequate answer to this puzzle.
It could be written in the following sloppily way:
\begin{equation*}
\frac{\mathrm{d}^{\frac{1}{2}}}{\mathrm{d}x^{\frac{1}{2}}}\,x=2\sqrt{\frac{x}{\pi}}
\end{equation*}
\end{exm}
\subsubsection{Rational functions}
\label{rational_functions}
The purpose of the present section is to evaluate the fractional derivative $\mathscr{D}^{\alpha}$ of the tempered distribution
$\mathrm{Pf}\,\frac{1}{x^n}$ with $n$ being a positive integer. First, we consider the case where $\alpha$ is a real but not integer number:
{$\alpha\in \mathbb{R}\setminus{\mathbb{Z}}$}:\\
As usual, we begin with the following Fourier transform:
\begin{equation*}
\mathscr{F}\mathscr{D}^{\alpha}\,\mathrm{Pf}\frac{1}{x^n}=i^{\alpha}x^{\alpha}\mathscr{F}\mathrm{Pf}\,\frac{1}{x^n}=i^{\alpha}x^{\alpha}T_g[\varphi]\\
\end{equation*}
where (according to Eq.~\ref{formula_power_negative}):
\begin{equation*}
g(x)=\pi\,\frac{x^{n-1}}{i^{n}(n-1)!}\,\mathrm{sgn}{(x)}
\end{equation*}
By means of Def.~\ref{firstdefinition} one may write:
\begin{equation*}
i^{\alpha}x^{\alpha}T_g[\varphi]=T_{h^{\star}}[\varphi]
\end{equation*}
with:
\begin{equation}
\label{replace}
h^{\star}(x)=i^{\alpha}x^{\alpha}g(x)=\frac{\pi \, x^{n+\alpha-1}}{i^{n-\alpha}\,\Gamma(n)}\text{sgn}{(x)}
\end{equation}
In order to determine the Fourier inverse transform of $T_{h^{\star}}[\varphi]$ we replace  in Eq.~\ref{replace} the signum function $\mathrm{sgn}(x)$ by $2H(x)-1$ and use the following identities afterwards:
\begin{equation*}
\begin{split}
&\mathscr{F}^{-1}\,\mathrm{Pf}\,x^{n+\alpha-1}=\frac{\Gamma(n+\alpha)}{2\pi}\,\left[(-1)^{n+1}\, \mathrm{e}^{-i\pi\alpha}\,\mathrm{Pf}(ix)^{-n-\alpha}+\mathrm{Pf}(-ix)^{-n-\alpha}\right]\\
&\mathscr{F}^{-1}\,\mathrm{Pf}\left(Hx^{n+\alpha-1}\right)=\frac{\Gamma(n+\alpha)}{2\pi}\,\mathrm{Pf}(-ix)^{-n-\alpha}
\end{split}
\end{equation*}
Finally, this gives:
\begin{equation*}
\mathscr{D}^{\alpha}\,\mathrm{Pf}\frac{1}{x^n}=i^{\alpha-n}\frac{\Gamma(n+\alpha)}{\Gamma(n)}\left[\mathrm{Pf}(-ix)^{-n-\alpha}-(-1)^{n+1}\,\mathrm{e}^{-i\pi\alpha}\,\mathrm{Pf}(ix)^{-n-\alpha}\right]\\
\end{equation*}
\begin{exm}
\begin{equation*}
\begin{split}
\mathscr{D}^{+\frac{1}{2}}\mathrm{Vp}\frac{1}{x}&=-\mathrm{Pf}\,\frac{H(-x)}{|x|^{\frac{3}{2}}}
\\
\mathscr{D}^{-\frac{1}{2}}\mathrm{Vp}\frac{1}{x}&=-2\pi\mathrm{Pf}\,\frac{H(-x)}{\sqrt{|x|}}
\end{split}
\end{equation*}
\end{exm}
Next, we evaluate the case where $\alpha$ is equal to an integer $m$ fulfilling the inequality 
$n-1+m\geq 0$.
We want to evaluate the following derivative:
\begin{equation*}
\mathscr{D}^{m}\,\mathrm{Pf}\,x^{-n}
\end{equation*}
Fourier transforming the latter expression yields:
\begin{equation*}
\mathscr{F}\mathscr{D}^{m}\,\mathrm{Pf}\,x^{-n}=i^mx^{m}\mathscr{F}\,\mathrm{Pf}\,x^{-n}=T_{g^{\star}}[\varphi]
\end{equation*}
where:
\begin{equation*}
g^{\star}(x)=\frac{\pi \, x^{n+m-1}}{i^{n-m}\,\Gamma(n)}\,\mathrm{sgn}{(x)}
\end{equation*}
Fourier inversion $\mathscr{F}^{-1}$ leads to:
\begin{equation}
\label{the_formula}
\mathscr{D}^{m}\,\mathrm{Pf}\frac{1}{x^n}=(-1)^{m}\,\frac{\Gamma(n+m)}{\Gamma(n)}\,\mathrm{Pf}\frac{1}{x^{n+m}}
\end{equation}
\begin{exm}
In particular, for $n=m=1$ formula \eqref{the_formula} reduces to the well-known case:
\begin{equation*}
\mathscr{D}\,\mathrm{Vp}\frac{1}{x}=-\mathrm{Pf}\frac{1}{x^2}
\end{equation*}
\end{exm}
Next, we consider the complementary case where $n-1+m<0$. Our purpose is to evaluate the expression:
\begin{equation}
\label{challenge}
\mathscr{D}^{m}\,\mathrm{Pf}\,x^{-n}
\end{equation}
Fourier transforming Eq.~\ref{challenge} gives (using Theorem \ref{theorem4}):
\begin{equation}
\label{equation_1}
\mathscr{F}\mathscr{D}^{m}\,\mathrm{Pf}\,x^{-n}=i^{m}x^{m}\mathscr{F}\mathrm{Pf}\,x^{-n}=\pi\frac{i^{-n+m}}{(n-1)!}\,x^{n-1+m}\,\mathrm{sgn}(x)
\end{equation}
In order to determine the inverse Fourier transform $\mathscr{F}^{-1}$ on the right-hand side of Eq.~\ref{equation_1} we use the following Lemma:
\begin{lem}
\begin{equation}
\label{equation_2}
\mathscr{D}^{n}\mathrm{Pf}\frac{1}{|x|}=(-1)^{n}n!\,\mathrm{Pf}\,\frac{\mathrm{sgn}(x)}{x^{n+1}}-2\,\mathscr{D}^{n}\delta[\varphi]\sum^{n}_{i=1}\frac{1}{i}
\end{equation}
\end{lem}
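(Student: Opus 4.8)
The plan is to reduce the statement to a single first-order recursion which is then iterated by induction on $n$. The starting observation is that $\mathrm{Pf}\,\frac{1}{|x|}=\mathrm{Pf}\,\frac{\mathrm{sgn}(x)}{x}$, so it is natural to introduce the family of finite-part distributions
\begin{equation*}
S_{m}:=\mathrm{Pf}\,\frac{\mathrm{sgn}(x)}{x^{m}},\qquad m\ge 1,
\end{equation*}
so that $S_{1}=\mathrm{Pf}\,\frac{1}{|x|}$ and the right-hand side of \eqref{equation_2} is $(-1)^{n}n!\,S_{n+1}-2\,\mathscr{D}^{n}\delta\sum_{i=1}^{n}\frac1i$. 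The whole Lemma then follows from the single recursion
\begin{equation*}
\mathscr{D}S_{m}=-m\,S_{m+1}+\frac{2(-1)^{m}}{m!}\,\mathscr{D}^{m}\delta,\qquad m\ge 1,
\end{equation*}
which I will prove first; note that its $m=1$ instance, $\mathscr{D}\,\mathrm{Pf}\frac{1}{|x|}=-\mathrm{Pf}\frac{\mathrm{sgn}(x)}{x^{2}}-2\,\mathscr{D}\delta$, is precisely \eqref{equation_2} for $n=1$.

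To establish the recursion I would unwind $\mathscr{D}S_{m}[\varphi]=-S_{m}[\varphi']$ through the finite-part definition and integrate by parts on each half-axis separately. For fixed $\epsilon>0$ one obtains the exact identity
\begin{equation*}
\int\limits_{|x|>\epsilon}\frac{\mathrm{sgn}(x)\,\varphi'(x)}{x^{m}}\,\mathrm{d}x
=-\frac{\varphi(\epsilon)+(-1)^{m}\varphi(-\epsilon)}{\epsilon^{m}}
+m\int\limits_{|x|>\epsilon}\frac{\mathrm{sgn}(x)\,\varphi(x)}{x^{m+1}}\,\mathrm{d}x,
\end{equation*}
where the last integral is exactly the $\epsilon$-truncation defining $m\,S_{m+1}$. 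Taking the finite part (\ie extracting the $\epsilon^{0}$, $\log$-free coefficient of the asymptotic expansion, an operation that is linear and therefore passes through the identity termwise) I would Taylor-expand the boundary term: according to the parity of $m$, the combination $\varphi(\epsilon)+(-1)^{m}\varphi(-\epsilon)$ is an even or an odd function of $\epsilon$, with Taylor coefficients $\varphi^{(2j)}(0)$ respectively $\varphi^{(2j+1)}(0)$. Dividing by $\epsilon^{m}$ and discarding the negative-power (divergent) contributions, the unique surviving constant term is $-2\,\varphi^{(m)}(0)/m!$ in both cases. Inserting $\varphi^{(m)}(0)=(-1)^{m}\mathscr{D}^{m}\delta[\varphi]$ and using the overall sign flip $\mathscr{D}S_{m}[\varphi]=-S_{m}[\varphi']$ then yields the recursion.

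With the recursion in hand the Lemma follows by induction on $n$. Assuming $\mathscr{D}^{n}S_{1}=(-1)^{n}n!\,S_{n+1}-2\,\mathscr{D}^{n}\delta\sum_{i=1}^{n}\frac1i$, I apply $\mathscr{D}$ once more and use linearity of $\mathscr{D}$ together with the recursion at level $m=n+1$. The $S$-part reproduces $(-1)^{n}n!\cdot(-(n+1))\,S_{n+2}=(-1)^{n+1}(n+1)!\,S_{n+2}$, while the two contributions to $\mathscr{D}^{n+1}\delta$ combine as required: the inherited term $-2\,\mathscr{D}^{n+1}\delta\sum_{i=1}^{n}\frac1i$ and the newly generated term coming from $(-1)^{n}n!\cdot\frac{2(-1)^{n+1}}{(n+1)!}=-\frac{2}{n+1}$ add up to $-2\,\mathscr{D}^{n+1}\delta\sum_{i=1}^{n+1}\frac1i$, which is the source of the harmonic sum.

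The main obstacle is the bookkeeping of the Hadamard finite part across the integration by parts: one must verify that the boundary term $-[\varphi(\epsilon)+(-1)^{m}\varphi(-\epsilon)]/\epsilon^{m}$ carries \emph{all} the extra divergence created by differentiating under $\mathrm{Pf}$, so that after extracting the finite part its only surviving contribution is the coefficient of $\epsilon^{0}$, \ie exactly the $\mathscr{D}^{m}\delta$ term, and no spurious $\log\epsilon$ or negative-power piece escapes. One must also confirm that the two finite-part extractions — for $S_{m}[\varphi']$ on the left and for $S_{m+1}[\varphi]$ on the right — use the same normalization, so that the identity may be read off coefficient by coefficient. Once this parity-and-order accounting is carried out carefully, the remainder is routine.
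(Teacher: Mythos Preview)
Your argument is correct, and your recursion $\mathscr{D}S_{m}=-m\,S_{m+1}+\frac{2(-1)^{m}}{m!}\mathscr{D}^{m}\delta$ together with the induction is sound; the parity bookkeeping in the boundary term and the extraction of the $\epsilon^{0}$ coefficient are handled properly.

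The paper, however, takes a different route. Instead of deriving a first-order recursion and iterating, it writes $\mathrm{Pf}\,\frac{1}{|x|}=\bigl(\mathrm{Pf}\,\frac{1}{x}\bigr)\cdot\mathrm{sgn}(x)$ and applies the distributional Leibniz rule directly at order $n$, inserting the known identities $\mathscr{D}^{n-i}\mathrm{Pf}\,\frac{1}{x}=(-1)^{n-i}(n-i)!\,\mathrm{Pf}\,x^{-(n-i+1)}$ and $[\mathrm{sgn}(x)]^{(i)}=2\,\delta^{(i-1)}=2(-1)^{i-1}(i-1)!\,\delta/x^{i-1}$. The harmonic sum then emerges in one stroke from the binomial identity $\binom{n}{i}(n-i)!(i-1)!/n!=1/i$, whereas in your approach it accumulates term by term through the inductive step. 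Your method is more self-contained: it stays entirely within the finite-part definition and avoids invoking Leibniz for the product of a singular distribution with a discontinuous function, which is a point the paper glosses over. The paper's method is shorter once those product rules are accepted, and it makes the algebraic origin of $\sum_{i=1}^{n}1/i$ transparent in a single line.
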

\begin{proof}
First, we write:
\begin{equation}
\mathrm{Pf}\frac{1}{|x|}=\mathrm{Pf}\,\frac{\mathrm{sgn}(x)}{x}=\left(\mathrm{Pf}\frac{1}{x}\right)\,\mathrm{sgn}(x)
\end{equation}
According to the general Leibniz rule for the nth derivative of a product  we have the following expression:
\begin{equation}
\mathscr{D}^{n}\left[\left(\mathrm{Pf}\frac{1}{x}\right)\,\mathrm{sgn}(x)\right]=\sum^{n}_{i=1}
\binom{n}{i}\mathscr{D}^{n-i}\,\left(\mathrm{Pf}\frac{\mathrm{1}}{x}\right)\,[\mathrm{sgn}(x)]^{(i)}
\end{equation}
Since 
\begin{equation*}
\begin{split}
&\mathscr{D}^{n-i}\,\left(\mathrm{Pf}\frac{\mathrm{1}}{x}\right)=(-1)^{n-i}(n-i)!\,\mathrm{Pf}\frac{\mathrm{1}}{x^{n-i+1}}\\
&[\mathrm{sgn}(x)]^{(i)}=2\,\delta^{(i-1)}[\varphi]=2\,(-1)^{i-1}(i-1)!\,\frac{\delta[\varphi]}{x^{i-1}},\quad i=1,2,3\dots
\end{split}
\end{equation*}
we get then:
\begin{equation*}
\begin{split}
\mathscr{D}^{n}\left[\left(\mathrm{Pf}\frac{1}{x}\right)\,\mathrm{sgn}(x)\right]&=
(-1)^{n}n!\,\mathrm{Pf}\,\frac{\mathrm{sgn}(x)}{x^{n+1}}+
2
\sum^{n}_{i=1}\frac{1}{i}(-1)^{n-1}\,\frac{\delta[\varphi]}{x^{n}}\\
&=(-1)^{n}n!\,\mathrm{Pf}\,\frac{\mathrm{sgn}(x)}{x^{n+1}}\,-2\,\mathscr{D}^{n}\delta[\varphi]\sum^{n}_{i=1}\frac{1}{i}
\end{split}
\end{equation*}
\end{proof}
Next, we apply a Fourier transform on both sides of Eq.~\ref{equation_2} and use the following formula \cite{gelfand:1964}:
\begin{equation}
\mathscr{F}\mathrm{Pf}\frac{1}{|x|}=-2\gamma-2\log{|x|}
\end{equation}
where $\gamma=0.5772156\dots$ denotes the Euler-Mascheroni constant.\\
\\
We immediately obtain:
\begin{equation}
\label{beautiful}
\mathscr{F}\mathrm{Pf}\,\frac{\mathrm{sgn}(x)}{x^{n+1}}=2\,\frac{(-ix)^{n}}{n!}\left[\sum^{n}_{i=1}\frac{1}{i}-\gamma-\log{|x|}\right]
\end{equation}
Now, we apply formula \ref{beautiful} to \ref{equation_1} by replacing the exponent $n$ with $-n-m$. Then we obtain finally the interesting formula:
\begin{equation}
\label{final_formula}
\mathscr{D}^{m}\,\mathrm{Pf}\,x^{-n}=\frac{(-1)^{n}}{(n-1)!\,(-n-m)!}\left[\sum^{-n-m}_{i=1}\frac{1}{i}-\gamma-\log{|x|}\right]x^{-n-m}
\end{equation}
\begin{rem}
Formula \ref{final_formula} can be compared with an analogous formula for $\mathscr{D}^{m}\,\mathrm{Pf}\,|x|^{-n}$ which is easily derived by using the Fourier transform of $\mathrm{Pf}\,|x|^{-n}$\cite{gelfand:1964}. The formula is given by:
\begin{equation}
\label{formula_gelfand}
\mathscr{D}^{m}\,\mathrm{Pf}\,|x|^{-n}=A
\begin{cases}
i\,d_{0}^{\,2k}x^{2k-1}-i\,d_{-1}^{\,2k}\,x^{2k-1}\ln{|x|}&\mathrm{if}\quad 1-n-m=2k\\
&\,\\
c_{0}^{(2k+1)}x^{2k}-c_{-1}^{(2k+1)}x^{2k}\ln{|x|}&\mathrm{if}\quad 1-n-m=2k+1
\end{cases}
\end{equation}
With
\begin{equation*}
\begin{split}
&a_{0}^{(l)}=\frac{i^{l-1}}{(l-1)!}\left[1+\frac{1}{2}+\cdots+\frac{1}{l-1}+\underbrace{\Gamma^{\prime}(1)}_{=-\gamma}+i\frac{\pi}{2}\right]\\
&A=\frac{i^{-n+m}}{2(n-1)!}
\end{split}
\end{equation*}
and
\begin{equation*}
\begin{split}
c_{0}^{(2k+1)}&=2\,\operatorname{Re}{a_{0}^{2k+1}},\quad c_{-1}^{(2k+1)}=2\frac{(-1)^{k}}{(2k)!}\\
d_{0}^{(2k)}&=2\,\operatorname{Im}{a_{0}^{2k}},\quad d_{-1}^{(2k)}=2\frac{(-1)^{k+1}}{(2k-1)!}
\end{split}
\end{equation*}
A short calculation shows that formula \ref{final_formula} coincide with \ref{formula_gelfand} for even $n$.
\end{rem}
\begin{exm}
\begin{equation*}
\begin{split}
\mathscr{D}^{-1}\,\frac{1}{x}&=\gamma+\log{|x|}\\
\mathscr{D}^{-2}\,\frac{1}{x}&=(\gamma+\log{|x|}-1)x
\end{split}
\end{equation*}
\end{exm}
\section{fractional derivatives of periodic functions}
The Schwartz-space of smooth and rapidly decreasing functions $\mathscr{S(\mathbb{R})}$ is a very restricted functional space, its dual $\mathscr{S^{\prime}(\mathbb{R})}$ being therefore a "large" space in the sense that it includes \eg the spaces $L^{p}(\mathbb{R})$, $1\leq p \leq\infty$.    
The operator $\mathcal{D}^{\alpha}$ acts on tempered distributions $T\in\mathscr{S^{\prime}(\mathbb{R})}$.  
It seems impossible to introduce an operator acting on \emph{functions} (in the classical sense) which preserves the properties i)\,-\,iv) of Sec.~\ref{intro}.
However, there is a restricted class of functions for which a consistent definition exists as well. As already mentioned in Sec.~\ref{sec_frac_deriv} a consistent definition for periodic functions (denoted by $\frac{\mathrm{d}^{\alpha}}{\mathrm{d}x^{\alpha}}f(x)$) exists via Eq.~\ref{half_derivative}.\\
\\
Without loss of generality, we assume $f(x)$ to be a $2\pi$-periodic function with the Fourier series:
\begin{equation*}
f(x)=\sum_{n=-\infty}^{\infty}c_{n}\mathrm{e}^{inx}=c_{0}+\sideset{}{'}\sum^{\infty}_{n=-\infty}c_{n}\mathrm{e}^{inx},\quad c_{n}\in \mathbb{C}
\end{equation*}
According to Eq.~\ref{half_derivative} and by regularizing the sum by the \emph{Abel summation} technique ($\mathcal{A}$) we define:
\begin{equation*}
\frac{\mathrm{d}^{\alpha}}{\mathrm{d}x^{\alpha}}f(x):=c_{0}\,\frac{\mathrm{d}^{\alpha}}{\mathrm{d}x^{\alpha}}1+\mathcal{A}-\sum_{n=-\infty}^{\infty}c_{n}(in)^{\alpha}\mathrm{e}^{inx},\quad \alpha \in \mathbb{R}
\end{equation*}
where by Eq.~\ref{prefac}:
\begin{equation*}
\frac{\mathrm{d}^{\alpha}}{\mathrm{d}x^{\alpha}}1=\begin{cases}
0& \mathrm{if} \alpha>0\\
\frac{1}{\Gamma(1-\alpha)}x^{-\alpha}&\mathrm{if} \alpha\leq 0\\
\end{cases}
\end{equation*}
and Abel sum:
\begin{equation*}
\mathcal{A}-\sum_{n=-\infty}^{\infty}c_{n}(in)^{\alpha}\mathrm{e}^{inx}:=\lim_{\epsilon\to0^{+}}\sum_{n=-\infty}^{\infty}c_{n}(in)^{\alpha}\mathrm{e}^{inx-\epsilon|n|}
\end{equation*}
\begin{exm}
The first example we consider is given by the following function:
\begin{equation*}
f(x)=x \quad \text{for}-\pi<x<\pi \quad \mathrm{(periodically\,continued)}
\end{equation*}
Where its Fourier series is given by:
\begin{equation*}
f(x)=2\sum_{n=1}^{\infty}(-1)^{n-1}\frac{\sin nx}{n}
\end{equation*}
According to Eq.~\ref{half_deriv_sin_cos} one obtains:
\begin{equation*}
\frac{\mathrm{d}^{\alpha}}{\mathrm{d}x^{\alpha}}f(x)=i^{\alpha-1}\mathcal{A}-\sum_{n=1}^{\infty}(-1)^{n-1}n^{\alpha-1}\left[\mathrm{e}^{inx}-\mathrm{e}^{-inx-i\pi \alpha}\right]
\end{equation*}
In general, the fractional derivative of a real valued periodic function turns into a complex-valued one, except for some specific values such as \eg $\alpha=\pm\frac{1}{2}$:
\begin{equation}
\begin{split}
\label{weierstrass}
&\frac{\mathrm{d}^{+\frac{1}{2}}}{\mathrm{d}x^{+\frac{1}{2}}}f(x)=\sqrt{2}\sum_{n=1}^{\infty}(-1)^{n-1}\frac{\sin{nx}+\cos{nx}}{\sqrt{n}}\\
&\frac{\mathrm{d}^{-\frac{1}{2}}}{\mathrm{d}x^{-\frac{1}{2}}}f(x)=\sqrt{2}\sum_{n=1}^{\infty}(-1)^{n-1}\frac{\sin{nx}-\cos{nx}}{n^{\frac{3}{2}}}
\end{split}
\end{equation}
\end{exm}
\begin{exm}
The second example we consider is given by the function:
\begin{equation*}
g(x)=|x| ,\quad\mathrm{for}-\pi<x<\pi \quad \mathrm{(periodically\,continued)}
\end{equation*}
with the Fourier series:
\begin{equation*}
g(x)=\frac{\pi}{2}-\frac{4}{\pi}\sum_{n=1}^{\infty}\frac{\cos{(2n+1)x}}{(2n+1)^2}
\end{equation*}
Again, by Eq.~\ref{half_deriv_sin_cos}
\begin{equation*}
\frac{\mathrm{d}^{\alpha}}{\mathrm{d}x^{\alpha}}f(x)=-\frac{2}{\pi}i^{\alpha}\mathcal{A}-\sum_{n=1}^{\infty}(2n+1)^{\alpha-2}\left[\mathrm{e}^{i(2n+1)x}+\mathrm{e}^{-i(2n+1)x-i\pi \alpha}\right]
\end{equation*}
Specifically, for $\alpha=\pm\frac{1}{2}$:
\begin{equation}
\begin{split}
\label{2nd_example}
&\frac{\mathrm{d}^{+\frac{1}{2}}}{\mathrm{d}x^{+\frac{1}{2}}}g(x)=-\frac{4}{\pi\sqrt{2}}\sum_{n=1}^{\infty}\frac{\cos{(2n+1)x}-\sin{(2n+1)x}}{\sqrt{2n+1}(2n+1)}\\
&\frac{\mathrm{d}^{-\frac{1}{2}}}{\mathrm{d}x^{-\frac{1}{2}}}g(x)=\sqrt{\pi x}-\frac{4}{\pi\sqrt{2}}\sum_{n=1}^{\infty}\frac{\cos{(2n+1)x}+\sin{(2n+1)x}}{\sqrt{2n+1}(2n+1)^2}
\end{split}
\end{equation}
\\
\end{exm}
\begin{figure}[h!]
  \centering
  \subfloat[][]{\includegraphics[width=0.4\linewidth]{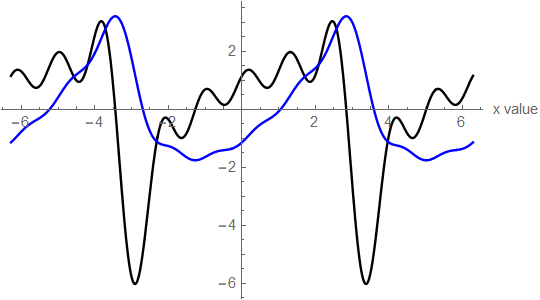}}
  \qquad
  \subfloat[][]{\includegraphics[width=0.4\linewidth]{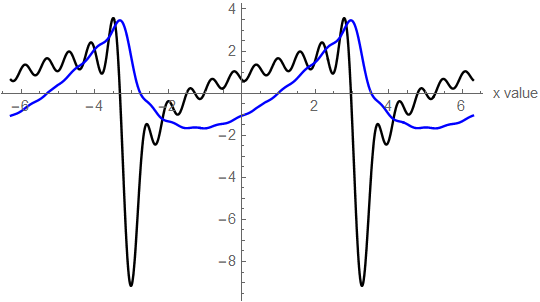}}
 \qquad
  \subfloat[][]{\includegraphics[width=0.4\linewidth]{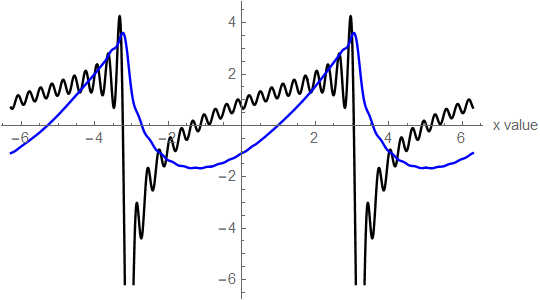}}
 \qquad
  \subfloat[][]{\includegraphics[width=0.4\linewidth]{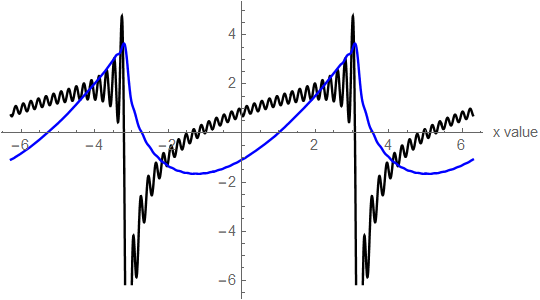}}
  \caption{Fourier series of $\frac{\mathrm{d}^{+\frac{1}{2}}}{\mathrm{d}x^{+\frac{1}{2}}}f(x)$ (black) and $\frac{\mathrm{d}^{-\frac{1}{2}}}{\mathrm{d}x^{-\frac{1}{2}}}f(x)$ (blue) up to order: $n=5$ (a), $n=10$ (b), $n=20$ (c), $n=30$ (d).}
\label{Bild}
\end{figure}
As shown in Fig.~\ref{Bild}, the graph of the truncated Fourier series of $\mathrm{d}^{\frac{1}{2}}/\mathrm{d}x^{\frac{1}{2}}\,f(x)$ is oscillating more and more rapidly with increasing order $n$. One can therefore conjecture that it is converging to a function of the Weierstrass type with the property of being continous but nowhere differentiable \cite{weierstrass:1895}.
\newpage
\appendix
\section{The Principal Value distribution $\bold{\mathrm{Vp}\,\frac{1}{x}}$}
\label{app1}
\subsection{Derivative of $\bold{\mathrm{Vp}\,\frac{1}{x}}$}
The derivative of the distribution $\mathrm{Vp}\,\frac{1}{x}$ is formally given by:
\begin{equation*}
\mathscr{D}\,\mathrm{Vp}\frac{1}{x}[\varphi]:=-\mathrm{Vp}\frac{1}{x}[\varphi^{\prime}]=-\lim_{\epsilon\to0^{+}}\int\limits_{|x|>\epsilon}\frac{\varphi^{\prime}(x)}{x}\mathrm{d}x
\end{equation*}
Integration by parts gives:
\begin{equation}
\begin{split}
\mathscr{D}\,\mathrm{Vp}\frac{1}{x}&=-\lim_{\epsilon\to0^{+}}\int\limits_{|x|>\epsilon}\frac{\varphi(x)}{x^2}\mathrm{d}x-\frac{\varphi(\epsilon)+\varphi(-\epsilon)}{\epsilon}\\
&=-\lim_{\epsilon\to0^{+}}\int\limits_{|x|>\epsilon}\frac{\varphi(x)}{x^2}\mathrm{d}x-\frac{2\,\varphi(0)}{\epsilon}=-\mathrm{Pf}\frac{1}{x^2}
\end{split}
\end{equation}
\subsection{Fourier transform of $\bold{\mathrm{Vp}\,\frac{1}{x}}$}
We start by evaluating the following Fourier transform:
\begin{equation*}
\begin{split}
\mathscr{F}\mathrm{Vp}\,\frac{\mathrm{e}^{-\epsilon|x|}}{x}
:=\lim_{\epsilon\to 0^{+}}\int\limits_{|x|>\epsilon}\frac{\hat{\varphi}(x)}{x}\mathrm{e}^{-\epsilon|x|}\mathrm{d}x
\end{split}
\end{equation*}
By Fubini's Theorem we obtain:
\begin{equation}
\label{reference_fub}
\lim_{\epsilon\to 0^{+}}\int\limits_{\mathbb{R}}\mathrm{d}k\,\varphi(k)\int\limits_{|x|>\epsilon} \frac{\mathrm{e}^{-\epsilon|x|-ikx}}{x}\mathrm{d}x
\end{equation}
Which is equal to:
\begin{equation}
\label{limes}
-2i\lim_{\epsilon\to 0^{+}}\int\limits_{\mathbb{R}}\mathrm{d}k\,\varphi(k)\int^{\infty}_{\epsilon}\frac{\mathrm{e}^{-\epsilon|x|}}{x}\sin{kx}\,\mathrm{d}x
\end{equation}
The limit $\epsilon\rightarrow 0^{+}$ in \ref{limes} can be evaluated by using the fact that
the Fourier transform $\mathscr{F}$ is continous in $\mathcal{S}^{\prime}(\mathbb{R})$:
\begin{equation*}
\lim_{\epsilon\to0^{+}}\mathscr{F}\mathrm{Vp}\,\frac{\mathrm{e}^{-\epsilon|x|}}{x}=
\mathscr{F}\mathrm{Vp}\,\frac{1}{x}=-2i\int\limits_{\mathbb{R}}\mathrm{d}k\,\varphi(k)\int^{\infty}_{0}\frac{\sin{kx}}{x}\,\mathrm{d}x
\end{equation*}
The latter integral (Dirichlet) is given by:
\begin{equation*}
\int^{\infty}_{0}\frac{\sin{kx}}{x}\,\mathrm{d}x=\frac{\pi}{2}\,\mathrm{sgn}(k)
\end{equation*}
Therefore:
\begin{equation*}
\mathscr{F}\mathrm{Vp}\frac{1}{x}[\varphi]=-i\pi\int\limits_{\mathbb{R}}\mathrm{d}k\,\varphi(k)\,\mathrm{sgn}(k) 
\end{equation*}
which proves the formula:
\begin{equation*}
\mathscr{F}\mathrm{Vp}\frac{1}{x}=-i\pi\,\mathrm{sgn}[\varphi]
\end{equation*}
\section{derivative of the Logarithm}
\label{app2}
Let $f(x)=\log{x}$ and $g(x)=\log{|x|}$ be the functions generating the regular distributions:
\begin{equation*}
\begin{split}  
T_{f}[\varphi]:&=\mathrm{Vp}\int\limits_{\mathbb{R}}\log{x}\,\varphi(x)\mathrm{d}x=\lim_{\epsilon\to 0^{+}}\int\limits_{\mathbb{R}\setminus[-\epsilon,\epsilon]}\log{x}\,\varphi(x)\mathrm{d}x\\
T_{g}[\varphi]:&=\mathrm{Vp}\int\limits_{\mathbb{R}}\log{|x|}\,\varphi(x)\mathrm{d}x=\lim_{\epsilon\to 0^{+}}\int\limits_{\mathbb{R}\setminus[-\epsilon,\epsilon]}\log{|x|}\,\varphi(x)\mathrm{d}x
\end{split}
\end{equation*}
Then 
\begin{equation}
\label{log_dist}
\mathscr{D}\,T_{f}[\varphi]=\mathrm{Vp}\frac{1}{x}+i\pi\,\delta[\varphi]
\end{equation}
\begin{equation}
\label{log_dist2}
\mathscr{D}\,T_{g}[\varphi]=\mathrm{Vp}\frac{1}{x}
\end{equation}
\begin{proof}
\begin{equation*}
\begin{split}
\mathscr{D}\,T_{f}[\varphi]&=-T_{f}\,[\varphi^{\prime}]=-\lim_{\epsilon\to 0^{+}}\int\limits_{\mathbb{R}\setminus[-\epsilon,\epsilon]}\log{x}\,\varphi^{\prime}(x)\mathrm{d}x\\
&=-\lim_{\epsilon\to 0^{+}}\,\int\limits_{\epsilon}^{\infty}\left[\log{(-x)}\,\varphi^{\prime}(-x)
+\log{x}\,\varphi^{\prime}(x)\right]\mathrm{d}x\,
\end{split}
\end{equation*}
By inserting the identity $\log{(-x)}=\log{(|x|)}-i\pi H(x)$ and performing partial integrations one obtains:
\begin{equation*}
\begin{split}
\mathscr{D}\,T_{f}[\varphi]&=\lim_{\epsilon\to 0^{+}}i\pi\varphi(-\epsilon)+\log{\epsilon}\underbrace{\left[\varphi(\epsilon)-\varphi(-\epsilon)\right]}_{\mathcal{O}(\epsilon)}\\
&+\lim_{\epsilon\to 0^{+}}\int\limits_{\epsilon}^{\infty}\frac{\varphi(x)-\varphi(-x)}{x}\mathrm{d}x=\mathrm{Vp}\frac{1}{x}+i\pi\,\delta[\varphi]
\end{split}
\end{equation*}
Proof of Eq.~\ref{log_dist2} is straightforward and well known \cite{gelfand:1964}. By taking the derivative on both sides of the identity (For convenience we write: $\mathscr{D}\,T_{f}\equiv \frac{d}{dx}\log{x}$)
\begin{equation*}
\log{x}=\log{|x|}-i\pi\,H(-x)
\end{equation*}
we obtain:
\begin{equation*}
\frac{d}{dx}\log{x}\underline{=}_{\eqref{log_dist}}\mathrm{Vp}\frac{1}{x}+i\pi\,\delta[\varphi]=\frac{d}{dx}\log{|x|}-i\pi\,\underbrace{\frac{d}{dx}H(-x)}_{=-i\pi\,\delta}
\end{equation*}
From which follows Eq.~\eqref{log_dist2}.
\end{proof}
\begin{rem}
The identity \ref{log_dist} was formulated by Dirac \cite{dirac:1947} well before the invention of distribution theory in the form:
\begin{equation}
\label{dirac_logarithm}
\frac{d}{dx}\log{x}=\frac{1}{x}+i\pi\,\delta(x)
\end{equation}
To be precise, in \cite{dirac:1947} Dirac wrote equation \ref{dirac_logarithm} with a minus sign, the reason being that he uses a logarithmic branch where $\log{(-1)}=+i\pi$ is valid.
\end{rem}
\section{Fourier transform of the Logarithm}
The Fourier transform of the logarithm is readily evaluated by starting with the identity \cite{gelfand:1964}
\begin{equation}
\label{fundamental}
\mathscr{F}\mathrm{Pf}\frac{1}{|x|}=-2\gamma-2\log{|x|}
\end{equation}
Taking the Fourier transform on both sides of \eqref{fundamental} leads to
\begin{equation}
\mathscr{F}\log{|x|}=-\pi\,\mathrm{Pf}\frac{1}{|x|}-2\pi\gamma\,\delta[\varphi]
\end{equation}
Next, we use:
\begin{equation}
\label{logarithm}
\log{x}=\log{|x|}-i\pi\,H(-x)
\end{equation}
Fourier transforming both sides of Eq.~\eqref{logarithm} yields:
\begin{equation*}
\mathscr{F}\log{x}=-\pi\,\mathrm{Pf}\frac{1}{|x|}-2\pi\gamma\,\delta[\varphi]-i\pi\,\mathscr{F}H(-x)
\end{equation*}
By using:
\begin{equation*}
\mathscr{F}H(-x)=\pi\,\delta[\varphi]+i\,\mathrm{Vp}\frac{1}{x}
\end{equation*}
we get:
\begin{equation*}
\mathscr{F}\log{x}=-\pi\,\mathrm{Pf}\frac{1}{|x|}+\pi\mathrm{Vp}\frac{1}{x}-(2\pi\gamma+i\pi^2)\,\delta[\varphi]
\end{equation*}
which is equal to:
\begin{equation*}
\mathscr{F}\log{x}=-2\pi\,\mathrm{Pf}\frac{H(-x)}{|x|}-(2\pi\gamma+i\pi^2)\,\delta[\varphi]
\end{equation*}
\appendix*
\newpage
\bibliography{References_paper_fractional_derivatives}
\bibliographystyle{plain}
\end{document}